\newcommand{\halmos}{\hfill \ifhmode\unskip\nobreak\fi\ifmmode\ifinner\else\hskip5pt
 \fi\fi \hbox{\hskip5pt\vrule width4pt height6pt depth1.5pt\hskip5pt}}
\newenvironment{proof}[1][Proof]{\textbf{#1.} }{\ \rule{0.5em}{0.5em}}
\newcommand{\Frac}[2] {\frac{\textstyle #1} {\textstyle #2}}
\newcommand{\R}{I\!\!R}
\newcommand{\Min}  {\mathop{\rm Min}}
\newtheorem{theorem}{Theorem}[section]
\newtheorem{lemma}{Lemma}[section]
\newtheorem{remark}{Remark}[section]
\begin{document}

\title{Geometrical inverse preconditioning for symmetric positive definite matrices}

 \author{Jean-Paul Chehab\thanks{
LAMFA, {\small UMR} 7352,
 Universit\'e de Picardie Jules Verne, 33 rue Saint Leu, 80039 Amiens France({\tt
 Jean-Paul.Chehab@u-picardie.fr})}
  \and
  Marcos Raydan \thanks{Departamento de C\'omputo Cient\'{\i}fico y
                  Estad\'{\i}stica,
          Universidad Sim\'on Bol\'{\i}var,
           Ap. 89000, Caracas 1080-A, Venezuela ({\tt mraydan@usb.ve}).
           Supported by CNRS (Unit FR3399 ARC Math\'ematiques, Universit\'e de Picardie Jules Verne, Amiens, France).}
}

\date{November 18, 2015}

\maketitle

\begin{abstract}
We focus on inverse preconditioners based on minimizing $F(X) = 1-\cos(XA,I)$, where $XA$ is the preconditioned matrix
 and $A$ is symmetric and positive definite.  We present and analyze  gradient-type methods to minimize $F(X)$
  on a suitable compact set. For that we use the geometrical properties of the non-polyhedral
 cone of symmetric and positive definite matrices, and also the special properties of $F(X)$ on the feasible set.
 Preliminary and encouraging numerical results are also presented
  in which dense and sparse approximations are included. \\ [2mm]
 {\bf Key words:} Preconditioning, cones of matrices, gradient method, minimal residual method.
\end{abstract}

\section{Introduction}

 Algebraic  inverse  preconditioning  play a key role  in a wide variety of  applications that involve the solution of large and sparse
  linear systems of equations; see e.g., \cite{Chehab, Chen01, Chen, ChowSaad, Guill02, Saad2010, sajo}.
 For a given square matrix $A$, there exist several proposals for constructing sparse inverse approximations which  are
  based on optimization techniques,  mainly based on  minimizing the Frobenius norm of the residual $(I - XA)$ over a set $P$ of  matrices
  with a certain  sparsity pattern; see e.g., \cite{benziTuma, Chen, ChowSaad97, ChowSaad, cosgrove, gonzalez, kolotilina, gonzalez2013, gouldscott}.
  However, we must remark that when $A$ is symmetric and positive definite, minimizing the Frobenius norm of the residual in general will
   produce an inverse  preconditioner which is neither symmetric nor positive definite; see, e.g., \cite{benziTuma}.

 There is currently a growing interest, and understanding, in the rich geometrical structure
  of the non-polyhedral cone of symmetric and positive semidefinite matrices ($PSD$); see e.g.,
  \cite{andraytar, ChehabRaydan, gonzalez, gonzalez2013, hill, iusem, tarazaga}.
  In this work, we  focus on inverse preconditioners based on minimizing the positive-scaling-invariant function
 $F(X) = 1-\cos(XA,I)$, instead of minimizing the Frobenius norm of  the residual. Our approach takes advantage of the
  geometrical properties of the   $PSD$ cone, and also of the special properties of $F(X)$ on a suitable compact set,  to introduce
  specialized gradient-type methods for which we analyze their convergence properties.

  The rest of the document is organized as follows. In Section 2, we develop and analyze two different gradient-type
   iterative schemes for finding inverse approximations based on minimizing $F(X)$, including sparse versions. In Section 3,
   we present  numerical results on some well-known test matrices to illustrate the behavior and properties of the
   introduced gradient-type methods. Finally, in Section 4 we present some concluding remarks.

\section{Gradient-type iterative methods}

Let us recall that the cosine between two $n\times n$ real matrices  $A$ and
$B$ is defined as
\begin{equation} \label{cosine}
\cos(A,B)=\Frac{\langle A,B \rangle}{\|A\|_F\|B\|_F},
\end{equation}
where $\langle A,B \rangle =trace(B^TA)$ is the Frobenius inner product
 in the space of matrices and  $\|\:.\:\|_F$ is the
associated Frobenius norm. By the Cauchy-Schwarz inequality it follows that
$$ |\cos(A,B)|\le 1, $$
 and the equality is attained if and only if $A=\gamma B$
for some nonzero real number $\gamma$.

To compute the inverse of a given symmetric and
positive definite matrix $A$ we consider  the function
\begin{equation} \label{unconst}
F(X)=1-\cos(XA,I)\ge 0,
\end{equation}
for which the  minimum value zero is reached at $X=\xi A^{-1}$, for  any positive real number $\xi$.
Let us recall that any positive semidefinite matrix $B$ has nonnegative diagonal entries and so $trace(B)\geq 0$.
Hence, if $XA$ is symetric, we need to impose that $\langle XA,I \rangle = trace(XA)\geq 0$ as a necessary condition
 for $XA$ to be in the $PSD$ cone, see \cite{ChehabRaydan, iusem, tarazaga}.
 Therefore, in order to impose uniqueness in the $PSD$ cone, we consider the constrained minimization problem
 \begin{equation} \label{const}
\Min_{X \in S \cap T} F(X),
\end{equation}
 where $S=\{X \in \R^{n \times n}\:|\: \|XA\|_F=\sqrt{n}\}$ and $T=\{X \in \R^{n \times n}\:|\: trace(XA)\geq 0\}$.
 Notice that $S\cap T$ is a closed and bounded set, and so problem (\ref{const}) is well-posed.
\begin{remark} \label{invscal}
For any $\beta>0$, $F(\beta X) = F(X)$, and so the function $F$ is invariant  under positive scaling.
\end{remark}
 The  derivative of $F(X)$, denoted by $\nabla F(X)$, plays an important role in our work.
\begin{lemma}
$$
\nabla F(X)=\Frac{1}{\|I\|_F\|XA\|_F}\left(\Frac{\langle XA,I
\rangle}{\|XA\|^2_F}XA-I\right)A.
$$
\end{lemma}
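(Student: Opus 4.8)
The plan is to compute $\nabla F(X)$ directly by differentiating $F(X) = 1 - \cos(XA,I)$ using the definition of the matrix cosine in~(\ref{cosine}). Writing $G(X) = \cos(XA,I) = \frac{\langle XA,I\rangle}{\|XA\|_F\,\|I\|_F}$, we have $\nabla F(X) = -\nabla G(X)$, so the whole task reduces to differentiating the scalar-valued function $G$ on the space of $n\times n$ matrices with the Frobenius inner product.

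First I would treat $G$ as a quotient $G(X) = \frac{N(X)}{D(X)}$ with numerator $N(X) = \langle XA,I\rangle = \operatorname{trace}(XA)$ and denominator $D(X) = \|I\|_F\,\|XA\|_F$, and apply the quotient rule $\nabla G = \frac{(\nabla N)\,D - N\,(\nabla D)}{D^2}$. The two ingredients are: (i) the gradient of the linear map $X \mapsto \operatorname{trace}(XA) = \langle X, A^T\rangle$, which is simply $A^T = A$ since $A$ is symmetric; and (ii) the gradient of $X \mapsto \|XA\|_F = \langle XA,XA\rangle^{1/2}$. For (ii) I would first note that $\nabla_X \|XA\|_F^2 = \nabla_X \langle XAA^T, X\rangle = 2XAA^T = 2XA^2$ (again using $A = A^T$), hence by the chain rule $\nabla_X \|XA\|_F = \frac{XA^2}{\|XA\|_F}$. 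Substituting these into the quotient rule and keeping $\|I\|_F$ as a constant factor, I get
\begin{equation*}
\nabla G(X) = \frac{1}{\|I\|_F\,\|XA\|_F}\left(A - \frac{\operatorname{trace}(XA)}{\|XA\|_F^2}\,XA^2\right),
\end{equation*}
and then $\nabla F(X) = -\nabla G(X)$ rearranges to the stated formula once one factors $A$ out on the right from both terms: $A - \frac{\langle XA,I\rangle}{\|XA\|_F^2}XA^2 = \left(I - \frac{\langle XA,I\rangle}{\|XA\|_F^2}XA\right)A$, which up to the overall sign is exactly $\left(\frac{\langle XA,I\rangle}{\|XA\|_F^2}XA - I\right)A$ inside the parentheses.

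There is no genuine obstacle here; the statement is a routine gradient computation and the proof is essentially bookkeeping. The only points requiring a little care are: respecting that differentiation is with respect to $X$ while $A$ is fixed (so factors of $A$ and $A^2$ appear on the correct side and with the correct transpose, which collapses thanks to symmetry of $A$), correctly applying the quotient and chain rules for Fréchet derivatives on matrix space with the Frobenius inner product, and checking the sign so that the minus in $F = 1 - \cos$ lands on the right term. I would close by verifying consistency with Remark~\ref{invscal}: replacing $X$ by $\beta X$ with $\beta>0$ scales the bracketed expression by $1/\beta$ from the prefactor $1/\|XA\|_F$ while the term in parentheses is itself invariant, which is the expected homogeneity of degree $-1$ of the gradient of a positively-scaling-invariant function, and as a further sanity check, at $X = \xi A^{-1}$ one has $XA = \xi I$, so $\frac{\langle XA,I\rangle}{\|XA\|_F^2}XA = \frac{\xi n}{\xi^2 n}\xi I = I$ and the gradient vanishes, confirming that the minimizers of $F$ are critical points.
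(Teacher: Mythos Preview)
Your proof is correct and follows essentially the same path as the paper's: both compute the Fr\'echet derivative of $F$ directly, the paper by expanding $\varphi(t)=F(X+tY)$ and reading off $\nabla F(X)$ from $\varphi'(0)=\langle\nabla F(X),Y\rangle$, and you by applying the quotient and chain rules to $G(X)=\langle XA,I\rangle/(\|I\|_F\|XA\|_F)$. The computations coincide term by term, and your sanity checks (homogeneity under positive scaling and vanishing at $X=\xi A^{-1}$) are a nice addition.
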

\begin{proof}
For fixed matrices $X$ and $Y$, we consider the function $\varphi(t) = F(X+tY)$.
It is well-known that  $\varphi'(0) = \langle \nabla F(X), Y \rangle$.
 We have
$$
F(X+tY)=1-\Frac{1}{\|I\|_F\|XA\|_F}\Frac{\langle XA,I \rangle
+t \langle YA,I \rangle} {\sqrt{1+2t\frac{ \langle XA,YA
\rangle}{\|XA\|^2_F}+t^2\frac{\|YA\|^2}{\|XA\|^2_F}}},
$$
and we obtain after differentiating $\varphi(t)$ and some algebraic manipulations
$$
\varphi'(0) = \langle  \Frac{1}{\|I\|_F\|XA\|_F}\left(\Frac{\langle
XA,I \rangle}{\|XA\|^2_F}XA-I\right)A, Y \rangle,
$$
and the result is established.
\end{proof}

\begin{theorem} \label{unique}
Problem (\ref{const}) possesses the  unique solution $X= A^{-1}$.
\end{theorem}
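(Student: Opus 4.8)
The plan is to reduce everything to the equality case of the Cauchy--Schwarz inequality. First I would note that $\|I\|_F=\sqrt{n}$ and that every feasible $X$ satisfies $\|XA\|_F=\sqrt{n}$, so on $S\cap T$ the objective simplifies to $F(X)=1-\langle XA,I\rangle/n = 1-trace(XA)/n$. Since $F(X)\ge 0$ for all $X$ by (\ref{unconst}), it then suffices to produce one feasible point at which $F$ vanishes and to show it is the only such point.

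Second, I would verify that $X=A^{-1}$ is feasible: $\|A^{-1}A\|_F=\|I\|_F=\sqrt{n}$, so $A^{-1}\in S$; and $trace(A^{-1}A)=trace(I)=n\ge 0$, so $A^{-1}\in T$. Since $F(A^{-1})=1-\cos(I,I)=0$ and $F\ge 0$ on all of $\R^{n\times n}$, the matrix $A^{-1}$ is a global minimizer of (\ref{const}).

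Third, for uniqueness, suppose $X\in S\cap T$ also satisfies $F(X)=0$, i.e.\ $\cos(XA,I)=1$. By the equality case of the Cauchy--Schwarz inequality recalled just after (\ref{cosine}), this forces $XA=\gamma I$ for some nonzero $\gamma\in\R$; moreover $\gamma>0$, because $\langle XA,I\rangle=\|XA\|_F\,\|I\|_F>0$ while $XA\ne 0$. Then $X\in S$ gives $|\gamma|\sqrt{n}=\|XA\|_F=\sqrt{n}$, hence $\gamma=1$ and $X=A^{-1}$. Note it is precisely the constraint set $T$ that discards the companion matrix $-A^{-1}$, which lies in $S$ but has $trace(-A^{-1}A)=-n<0$.

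I do not anticipate a genuine obstacle here; the argument is essentially bookkeeping around the Cauchy--Schwarz equality condition and the two defining constraints. The one delicate point is the sign of $\gamma$: one must invoke the trace constraint $T$ (equivalently, the fact that we need $\cos(XA,I)=+1$ and not $-1$) both to eliminate $X=-A^{-1}$ and to conclude $\gamma=1$, and one should record that $XA\ne 0$ on the feasible set so that linear dependence in Cauchy--Schwarz genuinely yields a nonzero proportionality constant.
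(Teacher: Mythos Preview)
Your argument is correct and complete. It differs from the paper's route, though. The paper argues via the gradient formula just established: on $S$ one has $\nabla F(X)=0$ iff $X=\pm A^{-1}$, and since $-A^{-1}\notin T$ while $F(A^{-1})=0$, the point $A^{-1}$ is the unique solution. Implicit in that reasoning is that a minimizer of the constrained problem must be an unconstrained critical point of $F$, which follows from the positive-scale invariance in Remark~\ref{invscal} but is not spelled out.

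Your approach bypasses the gradient entirely and works directly with the Cauchy--Schwarz equality case: $F(X)=0$ forces $XA=\gamma I$, the sign of the inner product gives $\gamma>0$, and the constraint $S$ pins $\gamma=1$. This is more elementary and self-contained, and it sidesteps the implicit step above. The paper's approach, in exchange, makes explicit that $\pm A^{-1}$ are the only stationary points of $F$ on $S$, tying the uniqueness statement into the gradient machinery developed for the algorithms that follow.
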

\begin{proof}
Notice that $\nabla F(X)=0$ for $X\in S$, if and only if $X=\beta A^{-1}$  for $\beta=\pm 1$. Now,
 $F(-A^{-1}) = 2$ and so  $X=-A^{-1}$ is the global maximizer of the function $F$ on $S$, but
 $-A^{-1}\notin T$; whereas
 $F(A^{-1}) = 0$ and  $A^{-1}\in T$.  Therefore, $X=A^{-1}$ is the unique feasible solution of (\ref{const}).
\end{proof}

 Before discussing different numerical schemes for solving problem (\ref{const}), we need a couple of technical lemmas.
   \begin{lemma} \label{ortogp}
  If $X\in S$ and $XA=AX$, then
$$
\langle \nabla F(X),X \rangle =0.
$$
\end{lemma}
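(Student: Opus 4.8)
The plan is to substitute the formula for $\nabla F(X)$ from the preceding lemma and then exploit that $A$ is symmetric, so that the only real content is a short trace manipulation. Write $M=\dfrac{\langle XA,I\rangle}{\|XA\|_F^2}\,XA-I$, so that $\nabla F(X)=\dfrac{1}{\|I\|_F\|XA\|_F}\,MA$; note that since $X\in S$ we have $\|XA\|_F=\sqrt{n}>0$, hence every quantity below is well defined. The key elementary observation is that, because $A=A^T$, the cyclic invariance of the trace gives, for any matrices $M$ and $X$,
$$\langle MA,X\rangle=trace(X^TMA)=trace(AX^TM)=trace((XA)^TM)=\langle M,XA\rangle .$$

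Applying this identity with the above $M$,
$$\langle \nabla F(X),X\rangle=\frac{1}{\|I\|_F\|XA\|_F}\,\langle M,XA\rangle=\frac{1}{\|I\|_F\|XA\|_F}\left(\frac{\langle XA,I\rangle}{\|XA\|_F^2}\,\langle XA,XA\rangle-\langle I,XA\rangle\right),$$
and since $\langle XA,XA\rangle=\|XA\|_F^2$ and $\langle I,XA\rangle=\langle XA,I\rangle$, the parenthesis collapses to $\langle XA,I\rangle-\langle XA,I\rangle=0$, which is the claim.

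I do not expect a genuine obstacle here: the statement is in essence a reformulation of the positive-scaling invariance recorded in Remark~\ref{invscal}, since differentiating $F(\beta X)=F(X)$ at $\beta=1$ yields $\langle \nabla F(X),X\rangle=0$ at once. The only point that deserves a little care is the trace identity $\langle MA,X\rangle=\langle M,XA\rangle$ above, and it is precisely there that the symmetry of $A$ is used; in fact the computation shows the conclusion holds for every $X\in S$, so the commutativity hypothesis $XA=AX$ is not strictly needed for this particular lemma — I would keep it in the statement only because it is the setting in which the subsequent results are used.
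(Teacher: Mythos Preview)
Your proof is correct and follows essentially the same route as the paper's: substitute the explicit formula for $\nabla F(X)$, use the symmetry of $A$ to shift it across the Frobenius inner product, and observe that the two resulting terms cancel because $\langle XA,XA\rangle=\|XA\|_F^2$. Your side remarks are also accurate --- the paper's own computation likewise never invokes the hypothesis $XA=AX$, and the one-line derivation from the scaling invariance in Remark~\ref{invscal} is a clean alternative.
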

\begin{proof}
Since $X\in S$ then $\|XA\|^2_F = n$, and  we have
$$
\nabla F(X)=\Frac{1}{n}\left( \Frac{\langle XA,I\rangle}{n}XA -I\right)A,
$$
  hence
$$
\langle \nabla F(X),X\rangle=\Frac{1}{n}\Frac{\langle XA,I\rangle}{n} \langle XAA,X\rangle -\Frac{1}{n}\langle A,X\rangle.
$$
But $\langle XAA,X\rangle=\|XA\|^2_F=n$, so
$$
\langle \nabla F(X),X\rangle =\Frac{\langle XA,I\rangle}{n} -\Frac{1}{n}\langle A,X\rangle=0,
$$
since $\langle A,X\rangle=\langle AX,I\rangle=\langle XA,I\rangle$.
\end{proof}

\begin{lemma} \label{ulbounds}
 If $X\in S$, then
$$
\Frac{\sqrt{n}}{\|A\|_F}\leq \|X\|_F\leq \sqrt{n} \|A^{-1}\|_F.
$$
\end{lemma}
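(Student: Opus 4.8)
The plan is to read off both inequalities from the submultiplicativity of the Frobenius norm, $\|MN\|_F \le \|M\|_F\,\|N\|_F$, combined with the single defining equality $\|XA\|_F=\sqrt{n}$ that characterizes the set $S$. First I would establish the lower bound: since $\sqrt{n}=\|XA\|_F\le\|X\|_F\,\|A\|_F$, dividing by $\|A\|_F$ (which is nonzero, because $A$ is symmetric positive definite and hence in particular nonzero) yields $\|X\|_F\ge \sqrt{n}/\|A\|_F$.

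For the upper bound I would exploit the factorization $X=(XA)A^{-1}$, which is legitimate precisely because $A$ is positive definite and therefore invertible. Applying submultiplicativity once more gives $\|X\|_F=\|(XA)A^{-1}\|_F\le\|XA\|_F\,\|A^{-1}\|_F=\sqrt{n}\,\|A^{-1}\|_F$, which is exactly the claimed estimate. Thus the two bounds together are just two one-line applications of the same norm inequality to $XA$ and to $(XA)A^{-1}$ respectively.

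The only point that deserves an explicit line is the submultiplicativity of $\|\cdot\|_F$; if one prefers not to invoke it directly, the same two estimates follow from the mixed inequality $\|MN\|_F\le\|M\|_2\,\|N\|_F$ (with $\|\cdot\|_2$ the spectral norm) together with $\|M\|_2\le\|M\|_F$, which in fact yields the slightly sharper constants $\|X\|_F\ge\sqrt{n}/\|A\|_2$ and $\|X\|_F\le\sqrt{n}\,\|A^{-1}\|_2$, and the stated (weaker) bounds are then immediate. I do not expect any genuine obstacle here: this lemma is a short norm-inequality argument, and the only care needed is to record that the invertibility of $A$ is what makes $A^{-1}$ and the factorization $X=(XA)A^{-1}$ available.
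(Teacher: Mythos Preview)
Your proposal is correct and follows essentially the same route as the paper: both inequalities are immediate applications of the submultiplicativity $\|MN\|_F\le\|M\|_F\|N\|_F$, once for $XA$ and once for a factorization of $X$ through $A^{-1}$. The only cosmetic difference is that the paper writes $X=A^{-1}(AX)$ while you write $X=(XA)A^{-1}$; your choice is in fact marginally cleaner, since it uses $\|XA\|_F=\sqrt{n}$ directly rather than $\|AX\|_F$, which is what the defining condition of $S$ actually provides.
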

\begin{proof}
For every $X$ we have $X=A^{-1}AX$, and so
$$
\|X\|_F=\|A^{-1} A X\|_F\leq \sqrt{n}\|A^{-1}\|_F.
$$
On the other hand, since  $X\in S$,  $\sqrt{n}= \|XA\|_F \leq \|X\|_F\|A\|_F$, and hence
$$
\|X\|_F\geq \Frac{\sqrt{n}}{\|A\|_F},
$$
and the result is established.
\end{proof}

\subsection{The negative gradient direction}

For the numerical solution of  (\ref{const}), we start by considering the classical gradient  iterations
 that, from an initial guess $X_0$, are given by
$$
X^{(k+1)}=X^{(k)}-\alpha_k \nabla F(X^{(k)}),
$$
where $\alpha_k>0$ is a suitable  step length. A standard approach is to use the optimal choice
 i.e., the positive step length that (exactly) minimizes the function $F(X)$ along the negative gradient
  direction. We present a closed formula for the optimal choice of step length in a more general setting,
  assuming that the iterative method is given by:
   $$ X^{(k+1)}=X^{(k)}+ \alpha_k D_k, $$
where $D_k$ is a search direction in the space of matrices.
\begin{lemma} \label{optalfa}
The optimal step length $\alpha_k$, that optimizes $F(X^{(k)}+ \alpha D_k)$, is given by
\[ \alpha_k = \Frac{\left(\langle X^{(k)}A, I\rangle \langle X^{(k)}A, D_kA\rangle - n\:\langle D_kA, I\rangle\right)}
  {\left(\langle D_kA, I\rangle \langle X^{(k)}A, D_kA\rangle - \langle  X^{(k)}A, I\rangle \langle D_kA, D_kA\rangle\right)}. \]
\end{lemma}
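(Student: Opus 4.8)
The plan is to reduce the statement to an ordinary one-dimensional calculus exercise. Since $F = 1 - \cos$, minimizing $\alpha \mapsto F(X^{(k)} + \alpha D_k)$ over $\alpha$ is the same as maximizing $\alpha \mapsto \cos\!\big((X^{(k)}+\alpha D_k)A,\, I\big)$. To lighten the notation I would set $B := X^{(k)}A$ and $C := D_k A$, so that $(X^{(k)}+\alpha D_k)A = B + \alpha C$, and, using $\|I\|_F = \sqrt{n}$ together with the expansion $\|B+\alpha C\|_F^2 = \|B\|_F^2 + 2\alpha\langle B,C\rangle + \alpha^2\|C\|_F^2$, write
$$\psi(\alpha) := \cos(B + \alpha C,\, I) = \frac{\langle B, I\rangle + \alpha\,\langle C, I\rangle}{\sqrt{n}\,\sqrt{\|B\|_F^2 + 2\alpha\,\langle B,C\rangle + \alpha^2\,\|C\|_F^2}}.$$

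Next I would differentiate. Writing $\psi(\alpha) = N(\alpha)\big/\big(\sqrt{n}\,Q(\alpha)^{1/2}\big)$ with $N$ affine and $Q$ quadratic in $\alpha$, the quotient rule shows that $\psi'(\alpha)$ has the same sign as $2N'(\alpha)Q(\alpha) - N(\alpha)Q'(\alpha)$, because $Q(\alpha)^{3/2} > 0$. Substituting $N'(\alpha) = \langle C,I\rangle$ and $Q'(\alpha) = 2\langle B,C\rangle + 2\alpha\|C\|_F^2$ and equating to zero gives a polynomial relation in $\alpha$; the point to check — and the only place where the computation must be done carefully — is that the $\alpha^2$ terms on the two sides cancel exactly, so that the stationarity condition is in fact \emph{linear} in $\alpha$. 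Collecting the surviving terms leaves
$$\alpha\,\big(\langle C,I\rangle\langle B,C\rangle - \langle B,I\rangle\,\|C\|_F^2\big) = \langle B,I\rangle\langle B,C\rangle - \langle C,I\rangle\,\|B\|_F^2 ,$$
and solving for $\alpha$, then inserting $\|B\|_F^2 = \|X^{(k)}A\|_F^2 = n$ (that is, $X^{(k)}\in S$), $\|C\|_F^2 = \langle D_kA, D_kA\rangle$, and $B = X^{(k)}A$, $C = D_kA$, reproduces exactly the stated formula for $\alpha_k$.

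It remains to argue that this unique stationary point is a minimizer of $F$ along $D_k$ and not a maximizer; since the stationarity equation is linear, $\psi$ has a single critical point, and one identifies it as the maximizer of $\psi$ (hence the minimizer of $F$) by comparing the value there with the finite limits $\pm\,\langle C,I\rangle/(\sqrt{n}\,\|C\|_F)$ of $\psi$ as $\alpha\to\pm\infty$, or, in the negative-gradient case $D_k = -\nabla F(X^{(k)})$, simply by noting that $\frac{d}{d\alpha}F(X^{(k)}+\alpha D_k)\big|_{\alpha=0} = \langle\nabla F(X^{(k)}), D_k\rangle = -\|\nabla F(X^{(k)})\|_F^2 < 0$. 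I expect no real obstacle beyond the bookkeeping in the differentiation — in particular the $\alpha^2$-cancellation, which is precisely what makes the exact line search available in closed form — and the implicit use of $X^{(k)}\in S$, which is what turns $\|X^{(k)}A\|_F^2$ into the constant $n$ appearing in the numerator.
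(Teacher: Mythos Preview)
Your proposal is correct and follows essentially the same approach as the paper: both define the one-variable function $\psi(\alpha)=F(X^{(k)}+\alpha D_k)$, differentiate, use the constraint $\|X^{(k)}A\|_F^2=n$ (i.e., $X^{(k)}\in S$), and solve $\psi'(\alpha)=0$. Your version is in fact more explicit than the paper's, which simply says the result follows ``after some algebraic manipulations''; in particular, your observation that the $\alpha^2$ terms cancel so that the stationarity condition is linear, and your discussion of why the critical point is a minimizer rather than a maximizer, are details the paper's proof omits.
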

\begin{proof}
Consider the auxiliary function in one variable
$$ \psi(\alpha) = F(X^{(k)}+ \alpha D_k)= 1 - \Frac{\langle X^{(k)}A, I\rangle + \alpha \langle D_kA, I\rangle }
{\sqrt{n} \|X^{(k)}A + \alpha D_kA\|_F}. $$
Differentiating $\psi(\alpha)$, using that $\langle X^{(k)}A, X^{(k)}A\rangle = n$, and also that
 \[ \frac{\partial}{\partial \alpha}\|X^{(k)}A + \alpha D_kA\|_F =  \frac{\langle X^{(k)}A, D_kA\rangle +
  \alpha \langle D_kA, D_kA\rangle}{\|X^{(k)}A + \alpha D_kA\|_F }, \]
 and then forcing $\psi'(\alpha)=0$ the result is obtained, after some algebraic manipulations.
\end{proof}
\begin{remark}
 For our first approach,  $D_k = - \nabla F(X^{(k)})$, and so for the  optimal gradient method (also known as Cauchy method or
  steepest descent method) the step length is given by
 \begin{equation} \label{alfaopt}
  \alpha_k = \Frac{\left(n\:\langle \nabla F(X^{(k)})A, I\rangle - \langle X^{(k)}A, I\rangle \langle X^{(k)}A, \nabla F(X^{(k)})A\rangle\right)}
  {\left(\langle \nabla F(X^{(k)})A, I\rangle \langle X^{(k)}A, \nabla F(X^{(k)})A\rangle - \langle  X^{(k)}A, I\rangle \|\nabla F(X^{(k)})A\|_F^2\right)}.
  \end{equation}
  Notice  that if we use  instead $D_k = \nabla F(X^{(k)})$, the obtained $\alpha_k$ which also forces  $\psi'(\alpha_k)=0$ is  given
  by (\ref{alfaopt}) but with a negative sign.
   Therefore, to guarantee that $\alpha_k>0$  minimizes $F$ along the negative gradient direction to approximate $A^{-1}$, instead of
    maximizing $F$ along the gradient direction to approximate $-A^{-1}$, we will choose the step length  $\alpha_k$ as the absolute value
    of the expression in (\ref{alfaopt}).
  \end{remark}

Since $\|I\|_F = \sqrt{n}$, the gradient iterations can be written as
\[ X^{(k+1)}= X^{(k)} - \Frac{\alpha_k}{\sqrt{n}\:\|X^{(k)}A\|_F}
 \left(\Frac{\langle X^{(k)}A,I \rangle}{\|X^{(k)}A\|^2_F}X^{(k)}A-I\right)A, \]
which  can be further simplified by imposing the  condition for uniqueness $\|X^{(k)}A\|_F=\sqrt{n}$. In that case we set
\begin{equation} \label{zetak}
Z^{(k+1)} = X^{(k)} - \Frac{\alpha_k}{n}\left(\Frac{ \langle X^{(k)}A,I
\rangle}{n}X^{(k)}A-I\right)A,
\end{equation}
 and then we  multiply the matrix $Z^{(k+1)}$ by the  factor $\sqrt{n}/\|Z^{(k+1)}A\|_F$
  to guarantee that  $X^{(k+1)}\in S$, i.e., such that $\|X^{(k+1)}A\|_F=\sqrt{n}$.

  Concerning the  condition that the sequence $\{X^{(k)}\}$ remains in $T$,  in our next result we establish that
   if the step length $\alpha_k$ remains uniformly bounded from above, then $trace(X^{(k)}A)> 0$ for all $k$.
   \begin{lemma}
Assume that $trace(X^{(0)}A)>0$ and that $0<\alpha_k \leq \Frac{n^{3/2}}{\|A\|^2_F}$.
 Then
 $$ trace(X^{(k)}A)=\langle X^{(k)}A,I \rangle >0 \;\mbox{ for all } k. $$
\end{lemma}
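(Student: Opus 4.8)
The plan is to argue by induction on $k$, the base case being the hypothesis $trace(X^{(0)}A)>0$. The key structural observation is that the normalization step produces $X^{(k+1)}$ as a \emph{positive} multiple of $Z^{(k+1)}$, namely $X^{(k+1)} = (\sqrt{n}/\|Z^{(k+1)}A\|_F)\,Z^{(k+1)}$, so that $\langle X^{(k+1)}A,I\rangle$ and $\langle Z^{(k+1)}A,I\rangle$ carry the same sign. Hence it will be enough to show that if $X^{(k)}\in S$ and $\langle X^{(k)}A,I\rangle>0$ then $\langle Z^{(k+1)}A,I\rangle>0$; recall that $X^{(k)}\in S$ holds for every $k\ge 1$ by construction and for $k=0$ by the standing assumption under which (\ref{zetak}) was derived.

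For the inductive step I would multiply (\ref{zetak}) on the right by $A$ and take the Frobenius inner product with $I$. Writing $t_k=\langle X^{(k)}A,I\rangle$ and using $\langle I,I\rangle=n$ together with $\langle A^2,I\rangle=trace(A^2)=\|A\|_F^2$, this should give the identity
$$ \langle Z^{(k+1)}A,I\rangle = t_k\left(1-\frac{\alpha_k}{n^2}\,\langle X^{(k)}A^3,I\rangle\right) + \frac{\alpha_k}{n}\,\|A\|_F^2 . $$
The last term is strictly positive since $\alpha_k>0$ and $A\neq 0$, and $t_k>0$ by the induction hypothesis, so everything reduces to controlling the sign of the parenthesized factor.

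To that end I would write $\langle X^{(k)}A^3,I\rangle=\langle X^{(k)}A,A^2\rangle$ and bound it, using Cauchy--Schwarz, the relation $X^{(k)}\in S$, and submultiplicativity of the Frobenius norm, by
$$ \left|\langle X^{(k)}A^3,I\rangle\right| \le \|X^{(k)}A\|_F\,\|A^2\|_F \le \sqrt{n}\,\|A\|_F^2 . $$
If $\langle X^{(k)}A^3,I\rangle\le 0$ then $\langle Z^{(k+1)}A,I\rangle$ is the sum of a nonnegative term and a strictly positive one, hence positive. If $\langle X^{(k)}A^3,I\rangle>0$, then the assumed bound $\alpha_k\le n^{3/2}/\|A\|_F^2$ gives $\frac{\alpha_k}{n^2}\langle X^{(k)}A^3,I\rangle \le \frac{1}{n^2}\cdot\frac{n^{3/2}}{\|A\|_F^2}\cdot\sqrt{n}\,\|A\|_F^2 = 1$, so the parenthesized factor is $\ge 0$ and $\langle Z^{(k+1)}A,I\rangle\ge \frac{\alpha_k}{n}\|A\|_F^2>0$. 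Either way $Z^{(k+1)}A\neq 0$, the normalization is well defined, and $\langle X^{(k+1)}A,I\rangle>0$, which closes the induction.

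The main obstacle is really just careful bookkeeping: one has to expand (\ref{zetak}) so that the term $(\langle X^{(k)}A,I\rangle/n)\,X^{(k)}A$ produces precisely $\langle X^{(k)}A^3,I\rangle$ after right-multiplication by $A$ and pairing with $I$, and one has to notice that splitting into the two sign cases for $\langle X^{(k)}A^3,I\rangle$ is what makes the crude Cauchy--Schwarz estimate sufficient; that split is also what explains the precise constant $n^{3/2}/\|A\|_F^2$ in the hypothesis.
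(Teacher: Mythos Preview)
Your argument is correct and follows essentially the same route as the paper: induction on $k$, expansion of $trace(Z^{(k+1)}A)$ from (\ref{zetak}) into $t_k\bigl(1-\tfrac{\alpha_k}{n^2}\,trace(X^{(k)}A^3)\bigr)+\tfrac{\alpha_k}{n}\|A\|_F^2$, the Cauchy--Schwarz bound $|trace(X^{(k)}A^3)|\le \sqrt{n}\,\|A\|_F^2$, and passage from $Z^{(k+1)}$ to $X^{(k+1)}$ by positive scaling. Your explicit sign split on $\langle X^{(k)}A^3,I\rangle$ is in fact slightly more careful than the paper, which tacitly treats only the upper bound and writes the parenthesized coefficient as strictly positive when it is merely $\ge 0$ at the endpoint $\alpha_k=n^{3/2}/\|A\|_F^2$; your version handles that boundary case cleanly.
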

\begin{proof}
 We proceed by induction. Let us assume that
 $$ w_k := trace(X^{(k)}A)=\langle X^{(k)}A,I \rangle>0. $$
  It follows that
$$
Z^{(k+1)}A=X^{(k)}A-\Frac{\alpha_k}{n}\left(\Frac{w_k}{n}X^{(k)}A -I\right)A^2,
$$
 and so
$$
trace(Z^{(k+1)}A)=w_k-\Frac{\alpha_k}{n}\left(\Frac{w_k}{n}trace(X^{(k)}A^3) - trace(A^2)\right).
$$
Now, since  $trace(A^2)= \langle A,A \rangle = \|A\|^2_F$ and
$$ trace(X^{(k)}A^3)\leq \|X^{(k)}A\|_F \|A\|^2_F=\sqrt{n}\|A\|^2_F, $$
 we obtain that
$$
trace(Z^{(k+1)}A)\geq (1-\Frac{\alpha_k}{n^2}\sqrt{n}\|A\|^2_F)w_k +\Frac{\alpha_k}{n}\|A\|^2_F.
$$
Since  $0<\alpha_k \leq \Frac{n^{3/2}}{\|A\|^2_F}$,  then $(1-\Frac{\alpha_k}{n^2}\sqrt{n}\|A\|^2_F) >0$, and we
 conclude that
$$
trace(Z^{(k+1)}A)  >0.
$$
Since $X^{(k+1)}$ is obtained as a positive scaling factor of $Z^{(k+1)}$, then  $w_{k+1} >0$  and the result is established.
\end{proof}

 Now, for some given matrices $A$, we cannot guarantee that the step length computed as the absolute value of  (\ref{alfaopt})
 will satisfy $\alpha_k \leq (n^{3/2})/\|A\|^2_F$ for all $k$.  Therefore,  if
   $trace(X^{(k+1)}A) = \langle X^{(k+1)}A,I \rangle < 0$ then we will set in our algorithm  $X^{(k+1)}= -X^{(k+1)}$ to guarantee
   that $trace(X^{(k+1)}A)\geq 0$, and hence that the cosine between $ X^{(k+1)}A$ and $I$ is nonnegative, which is a necessary
    condition to guarantee that $X^{(k+1)}$ remains in the $PSD$ cone; see, e.g., \cite{ChehabRaydan, iusem, tarazaga}. \\ [2mm]
  We now present our steepest descent gradient algorithm that will be referred as the CauchyCos Algorithm.

\begin{center}
\begin{minipage}[H]{14.5cm}
  \begin{algorithm}[H]
    \caption{: CauchyCos (Steepest descent approach on $F(X)=1-\cos(XA,I)$)} \label{cauchycos}
    \begin{algorithmic}[1]
        \State Given $X_0\in PSD$
       \For{$k=0,1, \cdots$ until a stopping criterion is satisfied,}
          \State {\bf Set} $w_k = \langle X^{(k)}A, I\rangle$
          \State {\bf Set}  $\nabla F(X^{(k)}) = \Frac{1}{n}\left(\Frac{w_k}{n}X^{(k)}A-I\right)A$
          \State {\bf Set} $\alpha_k = \left\vert\Frac{n\:\langle \nabla F(X^{(k)})A, I\rangle - w_k \langle X^{(k)}A, \nabla F(X^{(k)})A\rangle}
          {\langle \nabla F(X^{(k)})A, I\rangle \langle X^{(k)}A, \nabla F(X^{(k)})A\rangle - w_k \|\nabla F(X^{(k)})A\|_F^2}\right\vert$
           \State {\bf Set} $Z^{(k+1)}=X^{(k)} - \alpha_k \nabla F(X^{(k)})$
           \State {\bf Set} $X^{(k+1)}=s \sqrt{n} \Frac{Z^{(k+1)}}{\|Z^{(k+1)}A\|_F}$, where $s=1$ if $trace(Z^{(k+1)}A)>0$, $s=-1$ else
       \EndFor
    \end{algorithmic}
    \end{algorithm}
\end{minipage}
\end{center}

We note that if we start from $X^{(0)}$ such that $\|X^{(0)}A\|_F=\sqrt{n}$ then by construction
$\|X^{(k)}A\|_F=\sqrt{n}$, for all $k \geq 0$; for example,  $X^{(0)}=(\sqrt{n}/ \|A\|_F) I$ is a convenient
 choice. For that initial guess,  $trace(X^{(0}A) = \langle X^{(0}A,I \rangle > 0$ and again by construction all the iterates
   will remain in the $PSD$ cone. Notice also that, at each iteration, we need to compute the three matrix-matrix products:
   $X^{(k)}A$, $(\frac{w_k}{n}X^{(k)}A-I)A$, and $\nabla F(X^{(k)})A$, which for dense matrices require $n^3$ floating point
    operations (flops) each. Every one of the remaining calculations (inner products and Frobenius norms) are obtained with
  $n$ column-oriented inner products that require $n$ flops each. Summing up, in the dense case,  the computational cost of each iteration
   of the CauchyCos Algorithm is $3n^3 + O(n^2)$ flops.  In section 2.5,  we will discuss a sparse version of the CauchyCos Algorithm
   and its computational cost.

\subsection{Convergence properties of the CauchyCos Algorithm}

 We start by establishing the commutativity of all iterates with the matrix $A$.
 \begin{lemma} \label{commut}
If  $X^{(0)}A=AX^{(0)}$, then $X^{(k)}A=AX^{(k)}$, for all $k \ge 0$ in the CauchyCos Algorithm.
\end{lemma}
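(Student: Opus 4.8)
The plan is a straightforward induction on $k$. The base case $k=0$ is precisely the hypothesis $X^{(0)}A = AX^{(0)}$. For the inductive step, I would assume $X^{(k)}A = AX^{(k)}$ and show that each of the three matrices produced in one pass of the CauchyCos Algorithm, namely $\nabla F(X^{(k)})$, $Z^{(k+1)}$, and $X^{(k+1)}$, also commutes with $A$.

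The key observation to exploit is that the set $\mathcal{C}_A = \{M \in \R^{n\times n} \,:\, MA = AM\}$ is a linear subspace of $\R^{n\times n}$ that is moreover closed under matrix multiplication and contains both $I$ and $A$ (hence every polynomial in $A$). First I would note that the formula
$$
\nabla F(X^{(k)}) = \Frac{1}{n}\left(\Frac{w_k}{n}X^{(k)}A - I\right)A = \Frac{w_k}{n^2}\,X^{(k)}A^2 - \Frac{1}{n}A
$$
expresses $\nabla F(X^{(k)})$ as a linear combination (with the scalar coefficients $w_k/n^2$ and $1/n$) of the matrices $X^{(k)}A^2$ and $A$. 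Since $X^{(k)} \in \mathcal{C}_A$ by the inductive hypothesis and $A^2 \in \mathcal{C}_A$, the product $X^{(k)}A^2$ lies in $\mathcal{C}_A$; hence $\nabla F(X^{(k)}) \in \mathcal{C}_A$. (Alternatively, one checks directly that $X^{(k)}A^3 = AX^{(k)}A^2$, which is immediate from $X^{(k)}A = AX^{(k)}$.)

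Next, $Z^{(k+1)} = X^{(k)} - \alpha_k \nabla F(X^{(k)})$ is a linear combination of two elements of $\mathcal{C}_A$ — here $\alpha_k$ is just a fixed real scalar, whose particular value is irrelevant to the commutativity argument — so $Z^{(k+1)} \in \mathcal{C}_A$. Finally $X^{(k+1)} = s\sqrt{n}\, Z^{(k+1)}/\|Z^{(k+1)}A\|_F$ is obtained from $Z^{(k+1)}$ by multiplication by the real scalar $s\sqrt{n}/\|Z^{(k+1)}A\|_F$, so it too lies in $\mathcal{C}_A$, i.e. $X^{(k+1)}A = AX^{(k+1)}$. This closes the induction. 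I do not anticipate any real obstacle: the only points requiring a word of care are that $\mathcal{C}_A$ is closed under products (needed to handle the term $X^{(k)}A^2$) and that the scalar quantities $\alpha_k$ and $s\sqrt{n}/\|Z^{(k+1)}A\|_F$ play no role beyond being fixed real numbers at each step.
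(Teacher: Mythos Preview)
Your proposal is correct and follows essentially the same approach as the paper: induction on $k$, showing that $Z^{(k+1)}$ commutes with $A$ because it is built from $X^{(k)}$, $A$, and scalar coefficients, and then observing that $X^{(k+1)}$ is a scalar multiple of $Z^{(k+1)}$. The paper carries this out by a direct computation of $AZ^{(k+1)}$ versus $Z^{(k+1)}A$, whereas you phrase it via the commutant subalgebra $\mathcal{C}_A$, but this is only a cosmetic difference.
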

\begin{proof}
We proceed by induction. Assume that $X^{(k)}A=AX^{(k)}$. It follows that
$$
\begin{array}{ll}
AZ^{(k+1)}&=AX^{(k)} - \Frac{\alpha_k}{n}\left( \Frac{\langle X^{(k)}A,I\rangle}{n}AX^{(k)}A -A\right)A\\
 & =X^{(k)} A - \Frac{\alpha_k}{n}\left( \Frac{\langle X^{(k)}A,I\rangle}{n}AX^{(k)} -I\right)AA,\\
 &=\left(X^{(k)} - \Frac{\alpha_k}{n}\left( \Frac{\langle X^{(k)}A,I\rangle}{n}X^{(k)}A -I\right)A\right)A,\\
 &=Z^{(k+1)}A,
\end{array}
$$
 and since $Z^{(k+1)}$ and $X^{(k+1)}$ differ only by a scaling factor, then $AX^{(k+1)}=X^{(k+1)}A$.
 Hence, since $X^{(0)}A=AX^{(0)}$, the result hods for all $k$.
\end{proof}
\newline \noindent
It is worth noticing that using Lemma \ref{commut} and (\ref{zetak}), it follows by  simple calculations that
 $Z^{(k)}$ as well as $X^{(k)}$ are symmetric matrices for all $k$. In turn, if $X^{(0)}A=AX^{(0)}$, this clearly imply using Lemma \ref{commut} that $X^{(k)}A$ is also a symmetric matrix for all $k$.

Our next result establishes that the sequences generated by the CauchyCos Algorithm  are uniformly bounded away from zero, and hence
 the algorithm is well-defined.
 \begin{lemma} \label{bndseqs}
 If $X^{(0)}A=AX^{(0)}$, then the sequences $\{X^{(k)}\}$, $\{Z^{(k)}\}$, and $\{Z^{(k)}A\}$ generated by the CauchyCos Algorithm are uniformly bounded
  away from zero.
  \end{lemma}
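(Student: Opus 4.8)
The plan is to reduce everything to the two technical lemmas already at hand, Lemma~\ref{ortogp} and Lemma~\ref{ulbounds}, together with the commutativity from Lemma~\ref{commut}; the role of the hypothesis $X^{(0)}A=AX^{(0)}$ is precisely to make Lemma~\ref{ortogp} applicable at every iterate. Because the normalization step of the algorithm divides by $\|Z^{(k+1)}A\|_F$, the three lower bounds are entangled with the very well-definedness of the iteration, so I would run a single induction on $k$ carrying the statements: $X^{(k)}\in S$, $X^{(k)}A=AX^{(k)}$, and steps $0,\dots,k$ of CauchyCos are well defined.

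First, the bound on $\{X^{(k)}\}$ is immediate: since by construction $\|X^{(k)}A\|_F=\sqrt n$, i.e. $X^{(k)}\in S$, Lemma~\ref{ulbounds} gives $\|X^{(k)}\|_F\ge\sqrt n/\|A\|_F$, a positive constant independent of $k$. Next, for $\{Z^{(k)}\}$, I use the defining relation $Z^{(k+1)}=X^{(k)}-\alpha_k\nabla F(X^{(k)})$ from~(\ref{zetak}). By Lemma~\ref{commut} the iterate $X^{(k)}$ commutes with $A$, so Lemma~\ref{ortogp} yields $\langle\nabla F(X^{(k)}),X^{(k)}\rangle=0$; expanding the squared Frobenius norm, the cross term drops out and
$$\|Z^{(k+1)}\|_F^2=\|X^{(k)}\|_F^2+\alpha_k^2\,\|\nabla F(X^{(k)})\|_F^2\ \ge\ \|X^{(k)}\|_F^2\ \ge\ \frac{n}{\|A\|_F^2},$$
so $\|Z^{(k+1)}\|_F\ge\sqrt n/\|A\|_F>0$ for all $k$ (note that no upper bound on $\alpha_k$ is needed, since a larger step length only increases this quantity). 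Finally, for $\{Z^{(k)}A\}$, writing $Z^{(k+1)}=(Z^{(k+1)}A)A^{-1}$ and using submultiplicativity of the Frobenius norm gives $\|Z^{(k+1)}\|_F\le\|Z^{(k+1)}A\|_F\,\|A^{-1}\|_F$, hence
$$\|Z^{(k+1)}A\|_F\ \ge\ \frac{\|Z^{(k+1)}\|_F}{\|A^{-1}\|_F}\ \ge\ \frac{\sqrt n}{\|A\|_F\,\|A^{-1}\|_F}\ >\ 0.$$
In particular $\|Z^{(k+1)}A\|_F\neq0$, so $X^{(k+1)}$ is well defined, satisfies $\|X^{(k+1)}A\|_F=\sqrt n$, and by Lemma~\ref{commut} commutes with $A$, which closes the induction.

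I do not expect a genuinely hard estimate here; the only point that requires care is the bookkeeping of the induction — verifying that the normalization is legitimate at step $k$ before invoking Lemma~\ref{ulbounds} at index $k+1$ — and the degenerate case $\nabla F(X^{(k)})=0$, in which the method has already reached $X^{(k)}=A^{-1}$ and the orthogonality identity still gives $\|Z^{(k+1)}\|_F=\|X^{(k)}\|_F$, so the bounds persist.
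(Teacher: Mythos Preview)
Your proof is correct and follows essentially the same route as the paper: both invoke Lemma~\ref{commut} to justify Lemma~\ref{ortogp} at every iterate, use the resulting orthogonality $\langle\nabla F(X^{(k)}),X^{(k)}\rangle=0$ to conclude $\|Z^{(k+1)}\|_F\ge\|X^{(k)}\|_F$, then apply Lemma~\ref{ulbounds} and the submultiplicativity bound with $A^{-1}$. The only cosmetic difference is that you read off the inequality via the Pythagorean expansion of $\|Z^{(k+1)}\|_F^2$, whereas the paper computes $\langle Z^{(k+1)},X^{(k)}\rangle=\|X^{(k)}\|_F^2$ and applies Cauchy--Schwarz; your added care about the induction and well-definedness of the normalization step is a welcome refinement.
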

 \begin{proof}
 Using Lemmas \ref{ortogp} and \ref{commut}  we have that
  \[ \langle Z^{(k+1)},X^{(k)} \rangle = \|X^{(k)}\|_F^2 - \alpha_k \langle \nabla F(X^{(k)}),X^{(k)}\rangle = \|X^{(k)}\|_F^2, \]
which combined with the Cauchy-Schwarz inequality and Lemma \ref{ulbounds}  implies that
 $$ \|Z^{(k+1)}\|_F \geq \|X^{(k)}\|_F \geq \Frac{\sqrt{n}}{\|A\|_F} >0, $$
 for all $k$. Moreover, since    $A$ is nonsingular then
 \[ \|Z^{(k+1)}A\|_F \geq \Frac{\|Z^{(k+1)}\|_F }{\|A^{-1}\|_F} \geq \Frac{\sqrt{n}}{\|A\|_F\:\|A^{-1}\|_F} >0 \]
   is bounded away from zero for all $k$.
   \end{proof}

 \begin{theorem} \label{conv1}
 The sequence $\{X^{(k)}\}$ generated by the CauchyCos Algorithm converges to $A^{-1}$.
  \end{theorem}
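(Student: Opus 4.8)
The plan is to view the CauchyCos iteration as an optimal (exact line search) gradient method for minimizing $F$ on the compact set $S$, and to use the structure already established: all iterates commute with $A$, stay symmetric, remain in $T$ (so in the $PSD$ cone), and are uniformly bounded away from zero by Lemma \ref{bndseqs}. Since $S$ is closed and bounded, the sequence $\{X^{(k)}\}$ has accumulation points, and since each $X^{(k)}A$ is symmetric with $\|X^{(k)}A\|_F=\sqrt{n}$ and $\mathrm{trace}(X^{(k)}A)>0$, every accumulation point $X^\star$ also lies in $S\cap T$ with $X^\star A$ symmetric. Because the exact line search gives $F(X^{(k+1)})\le \psi_k(\alpha)$ for every admissible $\alpha$ and in particular $F(X^{(k+1)})\le F(X^{(k)})$ (the rescaling step $Z^{(k+1)}\mapsto X^{(k+1)}$ does not change $F$ by Remark \ref{invscal}, and the sign flip by $s$ can only decrease $F$ since it moves $X^{(k+1)}A$ from outside to inside the half-space where $\cos(\cdot,I)\ge 0$), the scalar sequence $\{F(X^{(k)})\}$ is nonincreasing and bounded below by $0$, hence convergent.

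Next I would extract the standard consequence of exact line search: along the negative gradient direction the decrease $F(X^{(k)})-F(X^{(k+1)})$ is bounded below by a quantity controlled by $\|\nabla F(X^{(k)})\|_F$ (or more precisely by the component of the gradient tangent to $S$; but by Lemma \ref{ortogp}, since $X^{(k)}\in S$ and $X^{(k)}A=AX^{(k)}$, we have $\langle \nabla F(X^{(k)}),X^{(k)}\rangle=0$, so the full gradient is already tangent to the sphere $S$ at $X^{(k)}$). Because $\sum_k \big(F(X^{(k)})-F(X^{(k+1)})\big) <\infty$ and the iterates live in a compact set on which $F$ is smooth with bounded second derivatives (using that $\|Z^{(k)}A\|_F$ is bounded away from zero, the denominators in $\nabla F$ and in $\alpha_k$ stay bounded), the decrease estimate forces $\nabla F(X^{(k)})\to 0$. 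Passing to a convergent subsequence $X^{(k_j)}\to X^\star$ and using continuity of $\nabla F$ on this compact set gives $\nabla F(X^\star)=0$ with $X^\star\in S\cap T$. By the argument in the proof of Theorem \ref{unique} (the only zero of $\nabla F$ on $S$ lying in $T$ is $A^{-1}$), we get $X^\star = A^{-1}$.

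Finally, since $A^{-1}$ is the unique accumulation point of the bounded sequence $\{X^{(k)}\}$ in the compact set $S$, the whole sequence converges to $A^{-1}$. The main obstacle I expect is making the decrease estimate from the exact line search rigorous and uniform: one must show that the optimal step $\alpha_k$ from Lemma \ref{optalfa} (taken in absolute value, and combined with the possible sign flip) actually produces $F(X^{(k+1)}) \le F(X^{(k)}) - c\,\|\nabla F(X^{(k)})\|_F^2$ for a constant $c>0$ independent of $k$. This requires lower-bounding the curvature of $\psi_k$ and upper-bounding the relevant inner products uniformly, which is where the uniform bounds of Lemmas \ref{ulbounds} and \ref{bndseqs} (keeping everything in a fixed compact region bounded away from the singularity of $F$) do the work; the sign-flip case needs a separate, easy check that flipping strictly decreases $F$ whenever it is invoked. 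A secondary subtlety is ruling out that $\nabla F(X^{(k)})\to 0$ happens at the maximizer $-A^{-1}$ rather than at $A^{-1}$; this is excluded because the iterates are forced to remain in $T$, where $F(-A^{-1})=2$ cannot be approached by a nonincreasing sequence starting from $F(X^{(0)})<2$.
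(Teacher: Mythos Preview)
Your outline is correct and follows the same skeleton as the paper: compactness of $S\cap T$ gives accumulation points, scale-invariance (Remark~\ref{invscal}) together with the sign flip makes $\{F(X^{(k)})\}$ nonincreasing and hence convergent, every accumulation point is a zero of $\nabla F$ in $S\cap T$, and Theorem~\ref{unique} then forces it to be $A^{-1}$, so the whole sequence converges.

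The genuine difference lies in how you establish that $\nabla F$ vanishes at an accumulation point. You aim for a uniform Armijo-type bound $F(X^{(k)})-F(X^{(k+1)})\ge c\,\|\nabla F(X^{(k)})\|_F^2$ with $c>0$ independent of $k$, which requires controlling the Lipschitz constant of $\nabla F$ along the full line-search segments (not only on $S$); you correctly identify this as the delicate step, and it can indeed be carried out using Lemmas~\ref{ulbounds} and~\ref{bndseqs}, but it is laborious. The paper bypasses this entirely: arguing by contradiction at a fixed accumulation point $\widehat{X}$ with $\nabla F(\widehat{X})\ne 0$, it chooses a single step $\hat{\alpha}>0$ giving decrease $\delta>0$ at $\widehat{X}$, and invokes continuity of the map $X\mapsto F(X)-F(X-\hat{\alpha}\,\nabla F(X))$ to transfer a decrease of at least $\delta/2$ to $X^{(k_j)}$ for all large $j$. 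Since $\alpha_{k_j}$ is the exact minimizer along the ray, this yields $F(X^{(k_j)})-F(X^{(k_j+1)})\ge \delta/2$, contradicting $F(X^{(k)})-F(X^{(k+1)})\to 0$. The paper's continuity trick thus delivers the same conclusion without any global curvature or Lipschitz estimate, and is the shorter way to close the argument; your route works too but costs more.
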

 \begin{proof}
 The sequence $\{X^{(k)}\}\subset S \cap T$, which is a closed and bounded set, therefore there exist limit points in $S \cap T$.
  Let $\widehat{X}$ be a limit point of $\{X^{(k)}\}$,   and let $\{X^{(k_j)}\}$ be a subsequence that converges to
  $\widehat{X}$. Let us suppose, by way of contradiction, that $\nabla F(\widehat{X})\ne 0$.

  In that case, the negative gradient, $-\nabla F(\widehat{X})\ne 0$, is a descent direction for the function $F$ at $\widehat{X}$. Hence,
  there exists $\hat{\alpha} >0$ such that
  \[ \delta = F(\widehat{X}) - F(\widehat{X}- \hat{\alpha}\nabla F(\widehat{X})) >0. \]
  Consider now an auxiliary function $\theta: \R^{n\times n} \rightarrow \R$ given by
  \[ \theta(X) = F(X) - F(X - \hat{\alpha}\nabla F(X)). \]
  Clearly, $\theta$ is a continuous function, and then $\theta(X^{(k_j)})$ converges to $\theta(\widehat{X}) = \delta$.
  Therefore, for all $k_j$ sufficiently large,
  \[ F(X^{(k_j)}) - F(X^{(k_j)} - \hat{\alpha}\nabla F(X^{(k_j)})) = \theta(X^{(k_j)}) \geq \delta/2. \]
  Now, since $\alpha_{k_j}$ was obtained using Lemma \ref{optalfa} as the exact optimal step length along the
   negative gradient direction, then using Remark  \ref{invscal} it follows that
  \begin{eqnarray*}
 F(X^{(k_j +1)}) = F(Z^{(k_j +1)}) & = & F(X^{(k_j)} - \alpha_{k_j} \nabla F(X^{(k_j)})) \\
                          & < & F(X^{(k_j)} - \hat{\alpha} \nabla F(X^{(k_j)})) \leq F(X^{(k_j)}) - \Frac{\delta}{2},
  \end{eqnarray*}
  and thus,
  \begin{equation}  \label{contrad}
  F(X^{(k_j)}) - F(X^{(k_j +1)}) \geq \Frac{\delta}{2},
  \end{equation}
  for all $k_j$ sufficiently large.

  On the other hand, since $F$ is continuous, $F(X^{(k_j)})$ converges to $F(\widehat{X})$. However, the whole sequence
  $\{F(X^{(k)})\}$ generated by the CauchyCos Algorithm is decreasing, and so $F(X^{(k)})$ converges to $F(\widehat{X})$, and
  since $F$ is bounded below then  for $k_j$ large enough
  $$ F(X^{(k_j)}) - F(X^{(k_j+1)})\rightarrow 0, $$
   which  contradicts (\ref{contrad}). Consequently,  $\nabla F(\widehat{X}) = 0$.

   Now, using Lemma \ref{unique}, it follows that $\nabla F(\widehat{X}) = 0$ implies $\widehat{X}= A^{-1}$.
    Hence, the subsequence $\{X^{(k_j)}\}$ converges to $A^{-1}$. Nevertheless, as we argued before, the whole
    sequence $F(X^{(k)})$ converges to $F(A^{-1})=0$, and by  continuity  the whole sequence
    $\{X^{(k)}\}$ converges to  $A^{-1}$.
   \end{proof}
\begin{remark}
 The optimal choice of step length $\alpha_k$, as it usually happens when combined with the negative gradient direction (see e.g.,
 \cite{bertsekas, ademir}),  produces an orthogonality between consecutive gradient directions, that in our setting becomes
   $\langle \nabla F(Z^{(k+1)}), \nabla F(X^{(k)})\rangle = 0$. Indeed, $\alpha_k$ minimizes
   $\psi(\alpha) = F(X^{(k)} - \alpha \nabla F(X^{(k)}))$ which means that
   \[ 0 =  \psi'(\alpha_k) = - \langle \nabla F(X^{(k)} - \alpha \nabla F(X^{(k)})), \nabla F(X^{(k)})\rangle =
   - \langle \nabla F(Z^{(k+1)}), \nabla F(X^{(k)})\rangle. \]
   This orthogonality is responsible for the  well-known zig-zagging behavior of the optimal gradient method, which in some cases
    induces a very slow convergence.
  \end{remark}

\subsection{A simplified search direction}

 To avoid the zig-zagging trajectory of the optimal gradient iterates, we now consider a different search direction:
\begin{equation} \label{secondir}
 \widehat{D}_k \equiv \widehat{D}(X^{(k)}) = - \Frac{1}{n}\left(\Frac{\langle X^{(k)}A,I \rangle}{n}X^{(k)}A-I\right),
 \end{equation}
to move from  $X^{(k)}\in S \cap T$ to the next iterate. Notice that $\widehat{D}_k A = - \nabla F(X^{(k)})$ and so
 $\widehat{D}_k$ can be viewed as a simplified version of  the search direction used in the classical steepest
 descent method. Notice also that $\widehat{D}_k$  resembles the residual direction $(X^{(k)}A-I)$ used in the
  minimal residual iterative method (MinRes) for minimizing $\|I - XA\|_F$ in the least-squares sense; see e.g., \cite{ChowSaad, Saad2010}.
  Nevertheless, the scaling factors in (\ref{secondir}) differ from the scaling factors in the classical residual
   direction at $X^{(k)}$.

 For solving  (\ref{const}), we now present a variation of the CauchyCos Algorithm, that will be referred as the MinCos Algorithm, which from a given
 initial guess $X_0$ produces a sequence of iterates using the search direction $\widehat{D}_k$,  while remaining in the compact set $S \cap T$.
  This new algorithm consists of simply replacing $- \nabla F(X^{(k)})$ in the CauchyCos Algorithm  by  $\widehat{D}_k$. \\

\begin{center}
\begin{minipage}[H]{14.5cm}
  \begin{algorithm}[H]
    \caption{: MinCos (simplified gradient approach on $F(X)=1-\cos(XA,I)$)} \label{mincos}
    \begin{algorithmic}[1]
        \State Given $X_0\in PSD$
       \For{$k=0,1, \cdots$ until a stopping criterion is satisfied,}
          \State {\bf Set} $w_k = \langle X^{(k)}A, I\rangle$
          \State {\bf Set}  $\widehat{D}_k = - \Frac{1}{n}\left(\Frac{w_k}{n}X^{(k)}A-I\right)$
          \State {\bf Set} $ \alpha_k = \left\vert\Frac{n\:\langle  \widehat{D}_kA, I\rangle - w_k \langle X^{(k)}A,  \widehat{D}_k A\rangle}
               {\langle \widehat{D}_k A, I\rangle \langle X^{(k)}A, \widehat{D}_k A\rangle - w_k \|\widehat{D}_k A\|_F^2}\right\vert$
           \State {\bf Set}$Z^{(k+1)}=X^{(k)} + \alpha_k \widehat{D}_k$
           \State {\bf Set} $X^{(k+1)}=s \sqrt{n} \Frac{Z^{(k+1)}}{\|Z^{(k+1)}A\|_F}$, where $s=1$ if $trace(Z^{(k+1)}A)>0$, $s=-1$ else
       \EndFor
    \end{algorithmic}
    \end{algorithm}
\end{minipage}
\end{center}

As before, we note that if we start from $X^{(0)}=(\sqrt{n}/ \|A\|_F) I$  then by construction
$\|X^{(k)}A\|_F=\sqrt{n}$, for all $k \geq 0$. For that initial guess,  $trace(X^{(0}A) = \langle X^{(0}A,I \rangle > 0$ and again
 by construction all the iterates remain in the $PSD$ cone.
  Notice also that, at each iteration, we now need to compute the two matrix-matrix products:
   $X^{(k)}A$,  and $\widehat{D}_kA$, which for dense matrices require $n^3$  flops each.
   Every one of the remaining calculations (inner products and Frobenius norms) are obtained with
  $n$ column-oriented inner products that require $n$ flops each. Summing up, in the dense case,  the computational cost of each iteration
   of the MinCos Algorithm is $2n^3 + O(n^2)$ flops.
   In Section 2.5, we will discuss a sparse version of the MinCos Algorithm and its computational cost.

  \subsection{Convergence properties of the MinCos Algorithm}

  We start by noticing that, unless we are at the solution, the search direction  $\widehat{D}_k$ is a descent direction.
 \begin{lemma} \label{scdirdesc}
 If $X \in S \cap T$ and $\nabla F(X)\ne 0$, the search direction $\widehat{D}(X)$ is a descent direction for the function $F$ at $X$.
\end{lemma}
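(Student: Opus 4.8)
The plan is to verify the definition of a descent direction directly, i.e. to show that $\langle \nabla F(X), \widehat{D}(X) \rangle < 0$ whenever $X \in S \cap T$ and $\nabla F(X)\ne 0$. The starting point is the identity already recorded just after (\ref{secondir}): since $X\in S$ we have $\|XA\|_F=\sqrt{n}=\|I\|_F$, so the formula of Lemma~\ref{optalfa}'s ambient expression for $\nabla F$ collapses to $\nabla F(X)=\frac1n\big(\frac{\langle XA,I\rangle}{n}XA-I\big)A$, and comparing with (\ref{secondir}) gives $\widehat{D}(X)A=-\nabla F(X)$. Because $A$ is nonsingular this is equivalent to $\widehat{D}(X)=-\nabla F(X)A^{-1}$; note that no commutativity hypothesis is needed here, only $X\in S$.

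Next I would substitute this relation into the Frobenius inner product and write, with $G:=\nabla F(X)$,
$$\langle \nabla F(X), \widehat{D}(X)\rangle = -\langle G, G A^{-1}\rangle = -\, trace\big(A^{-1}G^T G\big),$$
using that $A^{-1}$ is symmetric. Introducing the SPD square root $A^{-1/2}$ of $A^{-1}$ and applying the cyclic property of the trace, this becomes $-\, trace\big(A^{-1/2}G^T G A^{-1/2}\big) = -\,\|GA^{-1/2}\|_F^2 \le 0$. Since $A^{-1/2}$ is invertible, $\|GA^{-1/2}\|_F^2=0$ forces $G=0$; but by hypothesis $\nabla F(X)=G\ne 0$, so $\langle \nabla F(X), \widehat{D}(X)\rangle = -\|\nabla F(X)A^{-1/2}\|_F^2 < 0$, which is exactly the claim.

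There is no genuine obstacle in this argument; the only point deserving care is the second step, namely recognizing that $\langle G, GA^{-1}\rangle$ is a genuine weighted square (hence \emph{strictly} positive when $G\ne0$) precisely because $A^{-1}$ is symmetric and positive definite — a plain Cauchy--Schwarz bound would only give a sign that could, a priori, vanish. If one prefers to avoid the matrix square root, the same conclusion follows by diagonalizing $A^{-1}=Q\Lambda Q^{T}$ with $\Lambda=\mathrm{diag}(\lambda_i)$, $\lambda_i>0$, and observing that $trace(A^{-1}G^TG)=trace\big(\Lambda (GQ)^{T}(GQ)\big)=\sum_i \lambda_i \, \|(GQ)_{\cdot i}\|^2$, which is strictly positive unless $GQ=0$, i.e. unless $G=0$.
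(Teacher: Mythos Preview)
Your proof is correct and follows essentially the same route as the paper: both arguments start from the identity $\widehat{D}(X)A=-\nabla F(X)$, rewrite $\langle \nabla F(X),\widehat{D}(X)\rangle$ as $-\langle \nabla F(X)A^{-1},\nabla F(X)\rangle$, and then use the SPD square root $A^{-1/2}$ together with the cyclic property of the trace to recognize this as $-\|\nabla F(X)A^{-1/2}\|_F^2<0$. Your additional remark via diagonalization is a pleasant alternative but not needed.
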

\begin{proof}
 We need to establish that, for a given $X \in S \cap T$, $\langle \widehat{D}(X), \nabla F(X) \rangle <0$.  Since
  $A^{-1}$ is symmetric and positive definite, then it has a unique square root which is
   also symmetric and positive definite. This particular square root will be denoted as  $A^{-1/2}$.
   Therefore, since $\widehat{D}(X) A = - \nabla F(X)$, and using that $trace(E_1 E_2) = trace(E_2 E_1)$, for  given square matrices $E_1$ and $E_2$,
   it follows that
 \begin{eqnarray*}
  \langle \widehat{D}(X), \nabla F(X) \rangle & = &  \langle \widehat{D}(X) A  A^{-1}, \nabla F(X) \rangle   =
  - \langle \nabla F(X) A^{-1}, \nabla F(X) \rangle \\
   & = &   - \langle \nabla F(X) A^{-1/2}, \nabla F(X)A^{-1/2} \rangle
     =   - \|\nabla F(X) A^{-1/2}\|_F^2 <0.
  \end{eqnarray*}
  \end{proof}

\noindent
\begin{remark}
 The step length in the MinCos Algorithm  is obtained using the search direction $\widehat{D}_k$ in  Lemma (\ref{optalfa}).
  Notice  that if we use $-\widehat{D}_k$ instead of $\widehat{D}_k$, the obtained $\alpha_k$ which also forces  $\psi'(\alpha_k)=0$ is the one given
  by Lemma (\ref{optalfa}) but with a negative sign.
   Therefore, as in the CauchyCos Algorithm, to guarantee that $\alpha_k>0$  minimizes $F$ along the descent direction $\widehat{D}_k$ to approximate
    $A^{-1}$, instead of maximizing $F$ along the ascent direction  $-\widehat{D}_k$to approximate $-A^{-1}$, we  choose the step length  $\alpha_k$
    as the absolute value of the expression in Lemma (\ref{optalfa}).
  \end{remark}

  We now establish  the commutativity of all iterates with the matrix $A$.
 \begin{lemma} \label{commut2}
If  $X^{(0)}A=AX^{(0)}$, then $X^{(k)}A=AX^{(k)}$, for all $k \ge 0$ in the MinCos Algorithm.
\end{lemma}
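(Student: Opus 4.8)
The plan is to mirror the proof of Lemma \ref{commut} verbatim, since the only change from the CauchyCos iteration to the MinCos iteration is the replacement of the step direction $-\nabla F(X^{(k)})$ by $\widehat{D}_k$, and the scaling-factor step is identical. So I would proceed by induction on $k$, with the hypothesis $X^{(k)}A = AX^{(k)}$, and show $X^{(k+1)}A = AX^{(k+1)}$.

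First I would observe that, by definition (\ref{secondir}), $\widehat{D}_k = -\frac{1}{n}\left(\frac{w_k}{n}X^{(k)}A - I\right)$, and compute $A\widehat{D}_k$ and $\widehat{D}_k A$. Using the inductive hypothesis $AX^{(k)} = X^{(k)}A$ (and that $A$ commutes with $I$ and with itself), one gets $A\widehat{D}_k = -\frac{1}{n}\left(\frac{w_k}{n}AX^{(k)}A - A\right) = -\frac{1}{n}\left(\frac{w_k}{n}X^{(k)}A^2 - A\right)$ and similarly $\widehat{D}_k A = -\frac{1}{n}\left(\frac{w_k}{n}X^{(k)}A^2 - A\right)$, so $A\widehat{D}_k = \widehat{D}_k A$. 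Since $Z^{(k+1)} = X^{(k)} + \alpha_k \widehat{D}_k$ is a linear combination (with scalar coefficients $1$ and $\alpha_k$) of two matrices each commuting with $A$, it follows that $AZ^{(k+1)} = Z^{(k+1)}A$. Finally, because $X^{(k+1)}$ differs from $Z^{(k+1)}$ only by the scalar factor $s\sqrt{n}/\|Z^{(k+1)}A\|_F$, the commutation is preserved: $AX^{(k+1)} = X^{(k+1)}A$. This closes the induction, and since the base case $X^{(0)}A = AX^{(0)}$ is assumed, the conclusion holds for all $k \geq 0$.

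There is essentially no obstacle here — the argument is routine linear algebra. The only point requiring a modicum of care is to note that $w_k = \langle X^{(k)}A, I\rangle$ and $\alpha_k$ are scalars, so they pass freely through products with $A$; everything else reduces to the elementary facts that $A$ commutes with $I$, with $A^2$, and (by the inductive hypothesis) with $X^{(k)}$. For completeness one might also remark, exactly as in the paragraph following Lemma \ref{commut}, that this commutativity combined with (\ref{zetak})-style reasoning implies $Z^{(k)}$ and $X^{(k)}$ are symmetric for all $k$ whenever $X^{(0)}$ is, and hence $X^{(k)}A$ is symmetric for all $k$ — but that is a corollary, not part of the present statement.

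Alternatively, one could avoid re-deriving anything by phrasing the whole MinCos and CauchyCos families as instances of the generic update $X^{(k+1)} = s\sqrt{n}\,(X^{(k)} + \alpha_k D_k)/\|(X^{(k)}+\alpha_k D_k)A\|_F$ where the direction $D_k = G(X^{(k)})$ is obtained from $X^{(k)}$ by applying a matrix polynomial in $A$ with coefficients that are scalar functions of $X^{(k)}$; any such $G$ automatically preserves commutation with $A$. I would not belabor this abstraction in the paper, however — a short self-contained induction in the style of Lemma \ref{commut} is cleaner and keeps the MinCos subsection parallel to the CauchyCos one.
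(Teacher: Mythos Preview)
Your proposal is correct and follows essentially the same inductive argument as the paper's own proof: both assume $X^{(k)}A=AX^{(k)}$, deduce $AZ^{(k+1)}=Z^{(k+1)}A$ from the explicit form of the MinCos update, and then pass to $X^{(k+1)}$ via the scalar rescaling. The only cosmetic difference is that you isolate the intermediate fact $A\widehat{D}_k=\widehat{D}_k A$ before combining, whereas the paper works directly with $AZ^{(k+1)}$ in a single displayed chain; the content is identical.
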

\begin{proof}
We proceed by induction. Assume that $X^{(k)}A=AX^{(k)}$. We have that
$$
\begin{array}{ll}
AZ^{(k+1)}&=AX^{(k)} - \Frac{\alpha_k}{n}\left( \Frac{\langle X^{(k)}A,I\rangle}{n}AX^{(k)}A -A\right) \\
 & =X^{(k)} A - \Frac{\alpha_k}{n}\left( \Frac{\langle X^{(k)}A,I\rangle}{n}AX^{(k)} -I\right)A,\\
 &=\left(X^{(k)} - \Frac{\alpha_k}{n}\left( \Frac{\langle X^{(k)}A,I\rangle}{n}X^{(k)}A -I\right)A\right),\\
 &=Z^{(k+1)}A,
\end{array}
$$
 and since $Z^{(k+1)}$ and $X^{(k+1)}$ differ only by a scaling factor, then $AX^{(k+1)}=X^{(k+1)}A$.
 Hence, since $X^{(0)}A=AX^{(0)}$, the result hods for all $k$.
\end{proof}
\newline \noindent
It is worth noticing that using Lemma \ref{commut2} and (\ref{zetak}), it follows by  simple calculations that
 $Z^{(k)}$, $X^{(k)}$, and $X^{(k)}A$ in the MinCos Algorithm  are symmetric matrices for all $k$. These three sequences generated
  by the MinCos Algorithm  are also uniformly bounded away from zero, and so the algorithm is well-defined.
 \begin{lemma} \label{bndseqs2}
 If $X^{(0)}A=AX^{(0)}$, then the sequences $\{X^{(k)}\}$, $\{Z^{(k)}\}$, and $\{Z^{(k)}A\}$ generated by the MinCos Algorithm  are uniformly bounded
  away from zero.
  \end{lemma}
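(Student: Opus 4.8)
The plan is to follow the pattern of Lemma~\ref{bndseqs}, using commutativity (Lemma~\ref{commut2}) and the orthogonality relation of Lemma~\ref{ortogp}, but now pairing $Z^{(k+1)}$ with $X^{(k)}A$ instead of with $X^{(k)}$. To begin with, since $X^{(k)}\in S$ by construction, Lemma~\ref{ulbounds} gives at once $\|X^{(k)}\|_F\ge \sqrt{n}/\|A\|_F>0$, so $\{X^{(k)}\}$ is already uniformly bounded away from zero; it only remains to bound $\{Z^{(k)}\}$ and $\{Z^{(k)}A\}$ from below.

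The heart of the argument is a $k$-independent lower bound on $\|Z^{(k+1)}\|_F$. Writing $Z^{(k+1)}=X^{(k)}+\alpha_k\widehat{D}_k$, one has
\[
\langle Z^{(k+1)},X^{(k)}A\rangle=\langle X^{(k)},X^{(k)}A\rangle+\alpha_k\,\langle \widehat{D}_k,X^{(k)}A\rangle .
\]
Using $A=A^{1/2}A^{1/2}$ and the cyclic property of the trace, the first term equals $\|X^{(k)}A^{1/2}\|_F^2$. For the cross term I would use $\widehat{D}_kA=-\nabla F(X^{(k)})$, so that $\widehat{D}_k=-\nabla F(X^{(k)})A^{-1}$, and then combine the cyclic property of the trace with the commutativity $AX^{(k)}=X^{(k)}A$ (Lemma~\ref{commut2}) and the symmetry of $X^{(k)}$ and $X^{(k)}A$ to collapse $\langle \widehat{D}_k,X^{(k)}A\rangle$ into $-\langle \nabla F(X^{(k)}),X^{(k)}\rangle$, which is $0$ by Lemma~\ref{ortogp}. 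Hence $\langle Z^{(k+1)},X^{(k)}A\rangle=\|X^{(k)}A^{1/2}\|_F^2$.

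Next, the Cauchy--Schwarz inequality together with $\|X^{(k)}A\|_F=\sqrt{n}$ gives $\|Z^{(k+1)}\|_F\ge \|X^{(k)}A^{1/2}\|_F^2/\sqrt{n}$; and from $X^{(k)}=(X^{(k)}A^{1/2})A^{-1/2}$, submultiplicativity of $\|\cdot\|_F$, and Lemma~\ref{ulbounds} one obtains $\|X^{(k)}A^{1/2}\|_F\ge \|X^{(k)}\|_F/\|A^{-1/2}\|_F\ge \sqrt{n}/(\|A\|_F\,\|A^{-1/2}\|_F)$. Therefore $\|Z^{(k+1)}\|_F\ge \sqrt{n}/(\|A\|_F^2\,\|A^{-1/2}\|_F^2)>0$, and since $A$ is nonsingular, $\|Z^{(k+1)}A\|_F\ge \|Z^{(k+1)}\|_F/\|A^{-1}\|_F$ is bounded below by a positive constant as well. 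As in Lemma~\ref{bndseqs}, the argument is organized as an induction: assuming $X^{(k)}\in S$ (which holds for $k=0$ with $X^{(0)}=(\sqrt{n}/\|A\|_F)I$), the bound shows $\|Z^{(k+1)}A\|_F>0$, so the normalization step of the algorithm is well-defined and $X^{(k+1)}\in S$, closing the loop.

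The main obstacle is the cross term $\langle \widehat{D}_k,X^{(k)}A\rangle$: in the CauchyCos case Lemma~\ref{ortogp} applies directly to $\langle Z^{(k+1)},X^{(k)}\rangle$, whereas here the power of $A$ carried by $X^{(k)}A$ and the one missing from $\widehat{D}_k$ must first be cancelled against each other via commutativity before Lemma~\ref{ortogp} becomes usable. Choosing to test $Z^{(k+1)}$ against $X^{(k)}A$ rather than against $X^{(k)}$ (which would leave a factor $A^{-1}$ trapped between $\nabla F(X^{(k)})$ and $X^{(k)}$ and would not vanish) is exactly what produces this cancellation. The remaining steps --- the cyclic-trace identities, Cauchy--Schwarz, and submultiplicativity of $\|\cdot\|_F$ --- are routine.
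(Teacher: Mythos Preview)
Your proof is correct and follows essentially the same route as the paper's: both compute the inner product $\langle Z^{(k+1)}A,X^{(k)}\rangle$ (which, by cyclicity of the trace, equals your $\langle Z^{(k+1)},X^{(k)}A\rangle$), kill the cross term via $\widehat{D}_kA=-\nabla F(X^{(k)})$ and Lemma~\ref{ortogp}, and identify the remaining term with $\|X^{(k)}A^{1/2}\|_F^2$. The only cosmetic difference is where Cauchy--Schwarz is applied: the paper writes the pairing as $\langle Z^{(k+1)}A^{1/2},X^{(k)}A^{1/2}\rangle$ and bounds $\|Z^{(k+1)}A^{1/2}\|_F\le\|Z^{(k+1)}\|_F\|A^{1/2}\|_F$ together with $\|X^{(k)}A^{1/2}\|_F\ge\sqrt{n}/\|A^{1/2}\|_F$, obtaining the slightly sharper constant $\|Z^{(k+1)}\|_F\ge\sqrt{n}/\|A^{1/2}\|_F^2$, whereas you apply Cauchy--Schwarz directly and pass through Lemma~\ref{ulbounds}, landing on $\sqrt{n}/(\|A\|_F^2\|A^{-1/2}\|_F^2)$; either constant suffices.
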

 \begin{proof}
  From Lemma \ref{ulbounds} the sequence $\{X^{(k)}\}$ is uniformly bounded. For the sequence $\{Z^{(k)}\}$,  using Lemmas \ref{ortogp}
  and \ref{commut2}  we have that
  \begin{eqnarray*}
  \langle Z^{(k+1)}A^{1/2},X^{(k)}A^{1/2} \rangle & = & \langle Z^{(k+1)}A,X^{(k)} \rangle  =  \langle X^{(k)}A,X^{(k)} \rangle  +
   \alpha_k \langle D_kA,X^{(k)}\rangle  \\
   & = &  \langle X^{(k)}A,X^{(k)} \rangle  - \alpha_k \langle \nabla F(X^{(k)}),X^{(k)}\rangle
       =   \langle X^{(k)}A,X^{(k)} \rangle \\
       & = & \langle X^{(k)}A^{1/2},X^{(k)}A^{1/2} \rangle  = \|X^{(k)} A^{1/2}\|_F^2,
 \end{eqnarray*}
  where $A^{1/2}$ is the unique square root of $A$ which is also symmetric and positive definite.
Combining the previous equality with the Cauchy-Schwarz inequality, and using the consistency of the Frobenius norm,  we obtain
\begin{equation} \label{cota1}
  \|Z^{(k+1)}\|_F \|A^{1/2}\|_F \geq  \|Z^{(k+1)}A^{1/2}\|_F \geq \|X^{(k)}A^{1/2}\|_F.
\end{equation}
 Since $X^{(k)}\in S$, then $\sqrt{n} = \|X^{(k)}A\|_F \leq \|X^{(k)}A^{1/2}\|_F \|A^{1/2}\|_F$,
 which combined with (\ref{cota1}) implies that
 \[  \|Z^{(k+1)}\|_F \geq  \Frac{\sqrt{n}}{\|A^{1/2}\|_F^2} >0 \]
   is  bounded away from zero for all $k$.   Moreover, since   $A$ is nonsingular then
 \[ \|Z^{(k+1)}A\|_F \geq \Frac{\|Z^{(k+1)}\|_F }{\|A^{-1}\|_F} \geq \Frac{\sqrt{n}}{\|A^{1/2}\|_F^2\:\|A^{-1}\|_F} >0 \]
   is also bounded away from zero for all $k$.
   \end{proof}

 \begin{theorem}
 The sequence $\{X^{(k)}\}$ generated by the MinCos Algorithm converges to $A^{-1}$.
  \end{theorem}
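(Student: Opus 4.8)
The plan is to follow, almost verbatim, the argument used for Theorem~\ref{conv1} (convergence of CauchyCos), with the negative gradient replaced by the simplified descent direction $\widehat{D}_k$. All the ingredients are already available: the iterates lie in the compact set $S\cap T$ (so a limit point exists there), $\widehat{D}_k$ is a genuine descent direction away from the solution (Lemma~\ref{scdirdesc}), $\alpha_k$ is the exact minimizer of $F$ along $\widehat{D}_k$ (Lemma~\ref{optalfa}), $F$ is invariant under positive scaling (Remark~\ref{invscal}), and the only stationary point of $F$ on $S\cap T$ is $A^{-1}$ (Theorem~\ref{unique}). Commutativity with $A$ (Lemma~\ref{commut2}) and the fact that $\{X^{(k)}\}$, $\{Z^{(k)}\}$, $\{Z^{(k)}A\}$ stay bounded away from zero (Lemma~\ref{bndseqs2}) make the algorithm well defined, and, exactly as in the CauchyCos case, one has $F(X^{(k+1)})=F(Z^{(k+1)})$, so that $\{F(X^{(k)})\}$ is a nonincreasing, bounded-below sequence.

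Concretely, I would argue by contradiction. Let $\widehat X\in S\cap T$ be a limit point of $\{X^{(k)}\}$, say $X^{(k_j)}\to\widehat X$, and suppose $\nabla F(\widehat X)\neq 0$. By Lemma~\ref{scdirdesc}, $\widehat D(\widehat X)$ is a descent direction at $\widehat X$, so there is $\hat\alpha>0$ with $\delta:=F(\widehat X)-F(\widehat X+\hat\alpha\,\widehat D(\widehat X))>0$. Since $X\mapsto\widehat D(X)$ is continuous by (\ref{secondir}), the map $\theta(X)=F(X)-F(X+\hat\alpha\,\widehat D(X))$ is continuous, hence $\theta(X^{(k_j)})\to\theta(\widehat X)=\delta$ and so $\theta(X^{(k_j)})\geq\delta/2$ for all large $k_j$. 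Because $\alpha_{k_j}$ is the exact minimizer of $F$ along $\widehat D_{k_j}$ and $F$ is positive-scaling invariant,
\[ F(X^{(k_j+1)})=F(Z^{(k_j+1)})=F\big(X^{(k_j)}+\alpha_{k_j}\widehat D_{k_j}\big)\le F\big(X^{(k_j)}+\hat\alpha\,\widehat D_{k_j}\big)\le F(X^{(k_j)})-\frac{\delta}{2}, \]
so $F(X^{(k_j)})-F(X^{(k_j+1)})\geq\delta/2$ infinitely often. This contradicts the fact that $\{F(X^{(k)})\}$ is nonincreasing and bounded below, hence Cauchy, so that consecutive differences tend to $0$. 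Therefore $\nabla F(\widehat X)=0$, and by Theorem~\ref{unique} this forces $\widehat X=A^{-1}$.

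Finally, since $A^{-1}$ is the only possible limit point of $\{X^{(k)}\}$ and the whole sequence lives in the compact set $S\cap T$, the sequence converges to $A^{-1}$; equivalently, the monotone sequence $F(X^{(k)})$ converges to $F(A^{-1})=0$ and continuity of the relevant quantities on $S\cap T$ pins down the limit. The step requiring care — the main obstacle — is the bookkeeping on the sign $s$ in the renormalization: one must verify that $trace(Z^{(k+1)}A)>0$ along the iteration, so that $s=1$ and hence $F(X^{(k+1)})=F(Z^{(k+1)})$ by Remark~\ref{invscal}, rather than $F(X^{(k+1)})=2-F(Z^{(k+1)})$, which would break monotonicity. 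This holds because a strict decrease of $F$ below the value $1$ is equivalent to $\cos(Z^{(k+1)}A,I)>0$, and the suggested initialization $X^{(0)}=(\sqrt n/\|A\|_F)I$ satisfies $F(X^{(0)})<1$; the induction then runs exactly as in the CauchyCos case, the remaining well-definedness (division by $\|Z^{(k+1)}A\|_F$) being covered by Lemma~\ref{bndseqs2}.
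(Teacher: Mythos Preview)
Your proposal is correct and follows essentially the same approach as the paper: the paper's own proof consists of a single paragraph stating that, since $\widehat{D}(X)$ is a descent direction by Lemma~\ref{scdirdesc} and $\alpha_k$ is the exact minimizer along it, one simply repeats the argument of Theorem~\ref{conv1} with $-\nabla F(Y)$ replaced by $\widehat{D}(Y)$ throughout. Your write-up is in fact more detailed than the paper's, in particular your explicit handling of the sign $s$ in the renormalization step (which the paper glosses over in both convergence proofs).
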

 \begin{proof}
  From Lemma \ref{scdirdesc} the search direction $\widehat{D}(X)$ is a descent direction for $F$ at $X$, unless
   $\nabla F(X) = 0$. Therefore, since $\alpha_k$ in the MinCos Algorithm  is obtained as the exact minimizer of $F$ along the direction
    $D(X_k)$ for all $k$, the proof is obtained repeating the same arguments shown in the proof of Theorem \ref{conv1},
     simply replacing $-\nabla F(Y)$ by $D(Y)$ for all possible instances $Y$.
  \end{proof}

\subsection{Sparse versions}

 We now discuss how to dynamically impose sparsity in the sequence of iterates $\{X^{(k)}\}$ generated by either
  the CauchyCos Algorithm or the MinCos Algorithm, to  reduce their required storage and computational cost.

     A possible way of accomplishing this task is to prescribe a sparsity pattern beforehand, which is usually related to the sparsity
     pattern of the original matrix $A$, and then impose it at every iteration; see e.g., \cite{Chen01,  gouldscott, kolotilina, gonzalez2013}.
 At this point, we would like to mention that although there exist some special applications for which the involved matrices are
  large and dense \cite{forsman, helsing}, frequently in real applications the involved matrices are large and sparse. However,
  in general the inverse of a sparse matrix is dense anyway. Moreover, with very few exceptions, it is not possible
   to know a priori the location of the large or the small entries of the inverse. Consequently, it is very difficult
    in general to prescribe a priori  a nonzero sparsity pattern for the approximate inverse.

  As a consequence, to force sparsity in our gradient related algorithms, we use instead a numerical dropping strategy to each
   column (or row) independently, using a threshold tolerance, combined with a fixed bound on the maximum number of nonzero
    elements to be kept at each column (or row) to limit the fill-in. This combined strategy will be fully described in
    our numerical results  section.

  In the CauchyCos and MinCos Algorithms, the dropping strategy must be applied to the matrix  $Z_{k+1}$ right after it is obtained at
  Step 6, and before  computing $X_{k+1}$ at Step 7. That way, $X_{k+1}$ will remain sparse at all iterations, and we guarantee that
    $X_{k+1} \in S \cap T$. The new Steps  7 and 8, in the sparse versions of both algorithms, are given by \\ [2mm]
 $ 7:$   Apply numerical dropping to  $Z^{(k+1)}$ with a maximum number of nonzero entries \\ [2mm]
 $ 8:$  {\bf Set}  $X^{(k+1)}=s \sqrt{n} \Frac{Z^{(k+1)}}{\|Z^{(k+1)}A\|_F}$,  where  $s=1$ if  $trace(Z^{(k+1)}A)>0$, $s=-1$  else \\

 Notice that, since all the involved matrices are symmetric, the matrix-matrix products required in both algorithms can be performed using
 sparse-sparse mode column-oriented inner products; see, e.g., \cite{ChowSaad}. The remaining calculations (inner products and Frobenius norms),
  required to obtain the step length, must be also computed using sparse-sparse mode. Using this approach, which takes advantage of the
   imposed sparsity, the computational cost and the required storage of both algorithms are drastically reduced. Moreover, using the
    column oriented approach both algorithms have a potential for parallelization.

\section{Numerical Results}

We present some numerical results to illustrate the  properties of our gradient-type algorithms for obtaining inverse approximations.
All computations are performed in MATLAB using double precision.

 For a given  matrix $A$, the merit function $\Phi(X)=\Frac{1}{2}\|I - XA\|_F^2$  has been widely used for
computing approximate inverse preconditioners; see; e.g., \cite{benziTuma, Chen, ChowSaad97, ChowSaad, cosgrove, gonzalez, gouldscott, gonzalez2013}.
 In that case, the properties of the Frobenius norm permit in a natural way the use of parallel computing. Moreover,  the
minimization of  $\Phi(X)$ can  also be accomplished imposing a column-wise numerical dropping strategy leading to a sparse  approximation
of  $A^{-1}$. Therefore, when possible,  it is natural to compare the CauchyCos and the MinCos Algorithms  applied to the angle-related merit
function $F(X)$ with the optimal  Cauchy method applied to $\Phi(X)$ (referred from now on as the CauchyFro method), and  also to the
Minimal Residual (MinRes) method applied to $\Phi(X)$; see, e.g., \cite{benziTuma, ChowSaad}.

The gradient of  $\Phi(X)$ is given by $\nabla \Phi(X) = - A^T (I - XA)$,  and so  the iterations of the CauchyFro method, from the same
 initial guess $X^{(0)}=(\sqrt{n}/ \|A\|_F) I$  used by MinCos and CauchyCos,  can be written as
\begin{equation} \label{cauchyfro}
 X^{(k+1)}=X^{(k)} + \alpha_k G_k,
 \end{equation}
  where $ G_k = - \nabla \Phi(X^{(k)})$  and the step length $\alpha_k>0$ is obtained as the global minimizer of
   $\Phi(X^{(k)} + \alpha G_k)$ along the
  direction $G_k$, as follows
  \begin{equation} \label{alfachyfro}
 \alpha_k = \frac{\langle R_k,A G_k \rangle}{\langle A G_k, A G_k \rangle},
 \end{equation}
 where  $R_k= I - AX^{(k)}$ is the residual matrix at $X^{(k)}$.
 The iterations of the MinRes method can be obtained replacing $G_k$ by the residual matrix $R_k$ in (\ref{cauchyfro})
 and (\ref{alfachyfro}); see \cite{ChowSaad} for details. We need to remark that in the dense case,  the CauchyFro
  method needs to compute two matrix-matrix products per iteration, whereas the MinRes method by using the recursion $R_{k+1} = R_k - \alpha_k A R_k$
   needs one matrix-matrix product per iteration. \\

 For our experiments we consider the following test matrices in the $PSD$ cone:
\begin{itemize}
\item from  the Matlab gallery: Poisson,  Lehmer, Wathen, Moler, and miij. Notice that the Poisson  matrix, referred in Matlab as (Poisson, $N$)
is the $N^2\times N^2$ finite differences  2D discretization matrix of the negative
  Laplacian on $]0,1[^2$ with homogeneous  Dirichlet boundary conditions.
  \item Poisson 3D (that depends on the parameter $N$), is the $N^3\times N^3$  finite differences 3D discretization matrix of the negative
   Laplacian  on the unit cube with homogeneous Dirichlet  boundary conditions.
\item from the Matrix Market ({\tt http://math.nist.gov/MatrixMarket/}): nos1, nos2, nos5,  and nos6.
\end{itemize}

 In Table \ref{listtest} we report the considered test matrices with their size, sparsity properties, and  2-norm condition number $\kappa(A)$.
Notice that the Wathen matrices have random entries so we cannot report  their spectral properties. Moreover,   Wathen ($N$) is a sparse
 $n\times n$ matrix with $n=3N^2+4N+1$.
  In general  the inverse of all the considered matrices are dense, except the inverse of the Lehmer matrix which  is tridiagonal.

\begin{table}[ht!!]
\centering
\begin{tabular}{|l|c|c|c|}
\hline
Matrix $A$& Size ($n\times n$)& $\kappa(A)$  & $A$ \\
\hline
\hline
Poisson (50) & n=2500 &1.05e+03 & sparse \\
\hline
Poisson (100) & n=1000 & 6.01e+03 & sparse \\
\hline
Poisson (150) & n=22500 &1.34e+04  & sparse \\
\hline
Poisson (200) & n=400000 &2.38e+04& sparse \\
\hline
\hline
Poisson 3D (10)&n=1000 &79.13& sparse\\
\hline
Poisson 3D (15)&n=3375& 171.66& sparse\\
\hline
Poisson 3D (30)&n=27000 & 388.81& sparse\\
\hline
 Poisson 3D (50)&n=125000 & 1.05e+03&sparse\\
 \hline
 \hline
 Lehmer (100)&n=100&1.03e+04&dense\\
 \hline
 Lehmer (200)&n=200& 4.2e+04&dense\\
  \hline
 Lehmer (300)&n=300& 9.5e+04&dense\\
 \hline
 Lehmer (400)&n=400&1.7e+05&dense\\
 \hline
 Lehmer (500)&n=500& 2.6e+05&dense\\
 \hline
 \hline
minij (20)&n=20&677.62&dense\\
\hline
minij (30)&n=30& 1.5e+03&dense\\
\hline
minij (50)&n=50&4.13e+03&dense\\
\hline
minij (100)&n=100& 1.63e+04&dense\\
\hline
minij (200)&n=200&6.51e+04&dense\\
 \hline
 \hline
 moler (100)&n=100&3.84e+16&dense\\
 \hline
 moler (200)&n=200& 3.55e+16&dense\\
 \hline
 moler (300)&n=300& 3.55e+16&dense\\
 \hline
 moler (500)&n=500& 3.55e+16&dense\\
 \hline
 moler (1000)&n=1000& 3.55e+16&dense\\
 \hline
 \hline
 nos1&n=237&2.53e+07&sparse\\
 \hline
nos2&n=957& 6.34e+09&sparse\\
 \hline
nos5&n=468& 2.91e+04&sparse\\
 \hline
nos6&n=675& 8.0e+07&sparse\\
\hline
\end{tabular}
\caption{Considered test matrices and their characteristics.}
\label{listtest}
\end{table}

\subsection{Approximation to the inverse with no dropping strategy}

 To add understanding to the properties of the new CauchyCos and MinCos Algorithms, we start by testing their behavior, as
  well as the behavior of CauchyFro and MinRes,  without imposing  sparsity.
Since the goal is to compute an  approximation to $A^{-1}$, it is not necessary to carry on
 the iterations up to a very small tolerance parameter $\epsilon$, and we choose $\epsilon=0.01$ for our experiments.
For all methods, we stop the iterations when  $\min\{F(X^{(k)}), \Phi(X^{(k)})\} \leq \epsilon$.

 \begin{table}[ht!!]
 \centering
\begin{tabular}{|l|l||l|l|l|}
\hline
Matrix & CauchyCos& CauchyFro & MinRes & MinCos\\
\hline
Poisson 2D (n=50) & 88 & 132 & 7 & 6 \\
Poisson 2D (n=70) &  & & 7 & 6 \\
Poisson 2D (n=100) &  & & 7 & 7 \\
Poisson 2D (n=200) & &  & 7 & 7 \\
\hline
Poisson 3D (n=10) &  9 & 12 & 3 & 2 \\
Poisson 3D (n=15) & 10 & 14 & 3 & 2 \\
Poisson 3D (n=30) & & & 3 & 3 \\
Poisson 3D (n=50) & & & 3 & 3 \\
\hline
Lehmer (n=10) & 888 & 1141 & 21 & 15 \\
Lehmer (n=20) & 9987 & 49901 & 123 & 51 \\
Lehmer (n=30) & & & 355 & 109 \\
Lehmer (n=40) & & & 645 & 190 \\
Lehmer (n=50) & & & 987 & 293 \\
Lehmer (n=50) & & & 1399 & 423 \\
Lehmer (n=100) & & & 3905 & 1178 \\
Lehmer (n=200) & & & 16189 & 4684 \\
\hline
Minij (n=20) & 31271 & 63459 &209 & 45 \\
Minij (n=30) & 153456 & 629787 & 553 & 102 \\
Minij (n=50) & &  & 1565 & 307 \\
Minij (n=100) & &&  6771 & 1259 \\
Minij (n=200)& && 26961  & 5057 \\
\hline
Moler (n=100) & 7 & 83 & 3 & 3 \\
Moler (n=200) & 77 & 15243 & 19 & 12 \\
Moler (n=300) & & & 105 & 22 \\
Moler (n=500) & & & 381 & 48 \\
Moler (n=1000) & &  & 1297 & 152 \\
\hline
Wathen (n=10) & 10751 & 17729 & 68 & 57 \\
Wathen (n=20) & 495 & 1112 & 22 & 16 \\
Wathen (n=30) & & & 24 & 17 \\
Wathen (n=50) & & & 20 & 15 \\
\hline
\end{tabular}
\caption{Number of iterations required for all considered methods when $\epsilon=0.01$.}
\label{densetest}
\end{table}

 Table \ref{densetest} shows the number of required iterations by the four considered algorithms when applied to some of the test functions,
  and for different values of $n$. No information in some of the entries of the table indicates that the corresponding method requires
   an excessive amount of iterations as compared with the MinRes and MinCos Algorithms. We can observe that  CauchyFro and CauchyCos
    are not competitive with  MinRes and MinCos, except for very few cases and for very small dimensions.
  Among the Cauchy-type methods,   CauchyCos  requires  less iterations than  CauchyFro, and in several cases
   the difference is significant. The MinCos and MinRes Algorithms were able to accomplish the required tolerance using a
    reasonable amount of iterations, except for the Lehmer($n$) and minij($n$) matrices for larger values of $n$, which are the
     most difficult ones in our list of test matrices.  The MinCos Algorithm clearly outperforms the MinRes Algorithm, except for the
   Poisson 2D ($n$) and  Poisson 3D ($n$) for which both methods require the same number of iterations. For the more difficult matrices
    and specially for larger values of $n$, MinCos reduces in the average the number of iterations with respect to MinRes
     by a factor of 4.

\begin{figure*}[ht]
 \centering
    \begin{subfigure}[b]{0.5\textwidth}
        \centering
    \includegraphics[width=3in,height=2.0in]{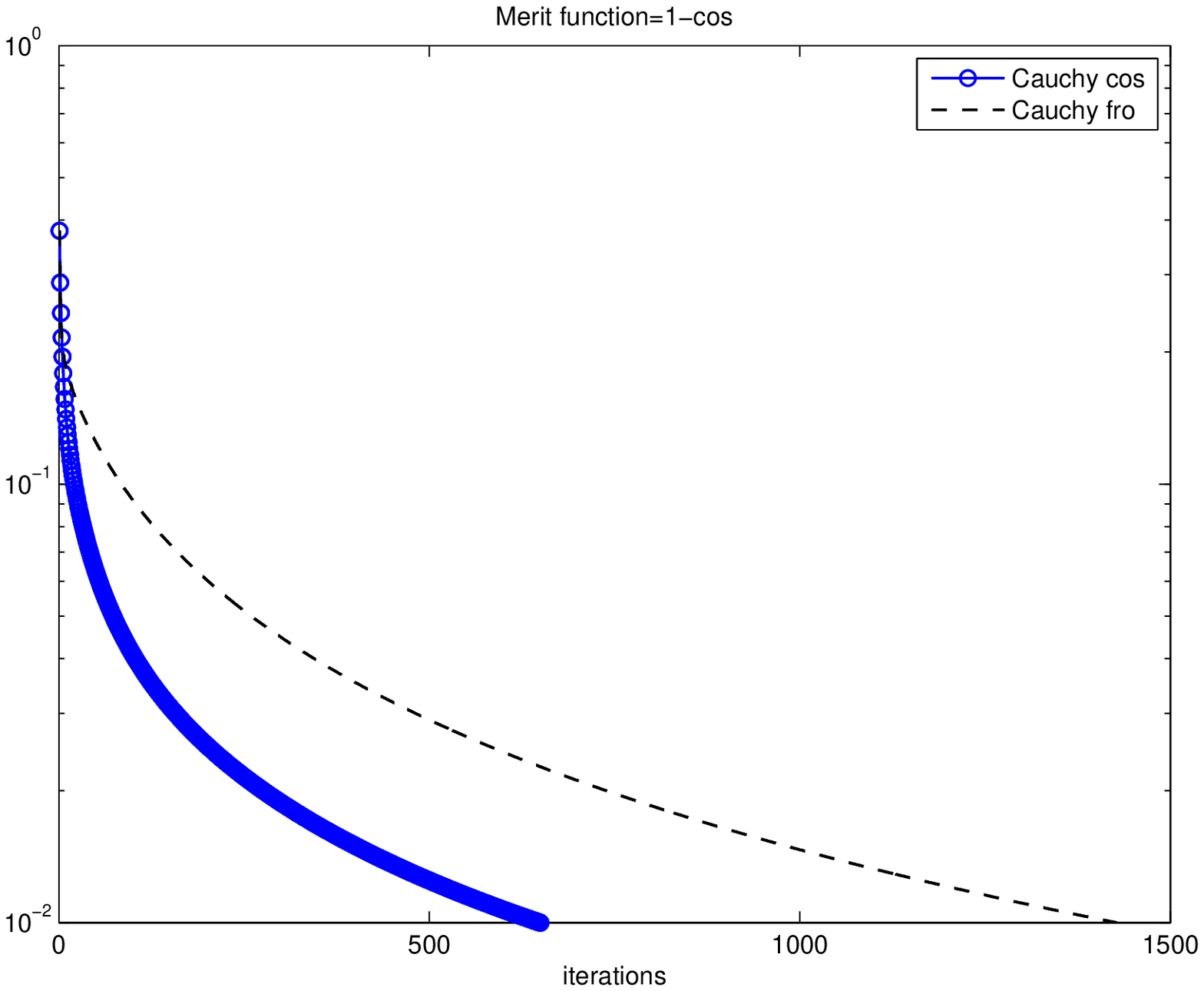}
\includegraphics[width=3in,height=2.0in]{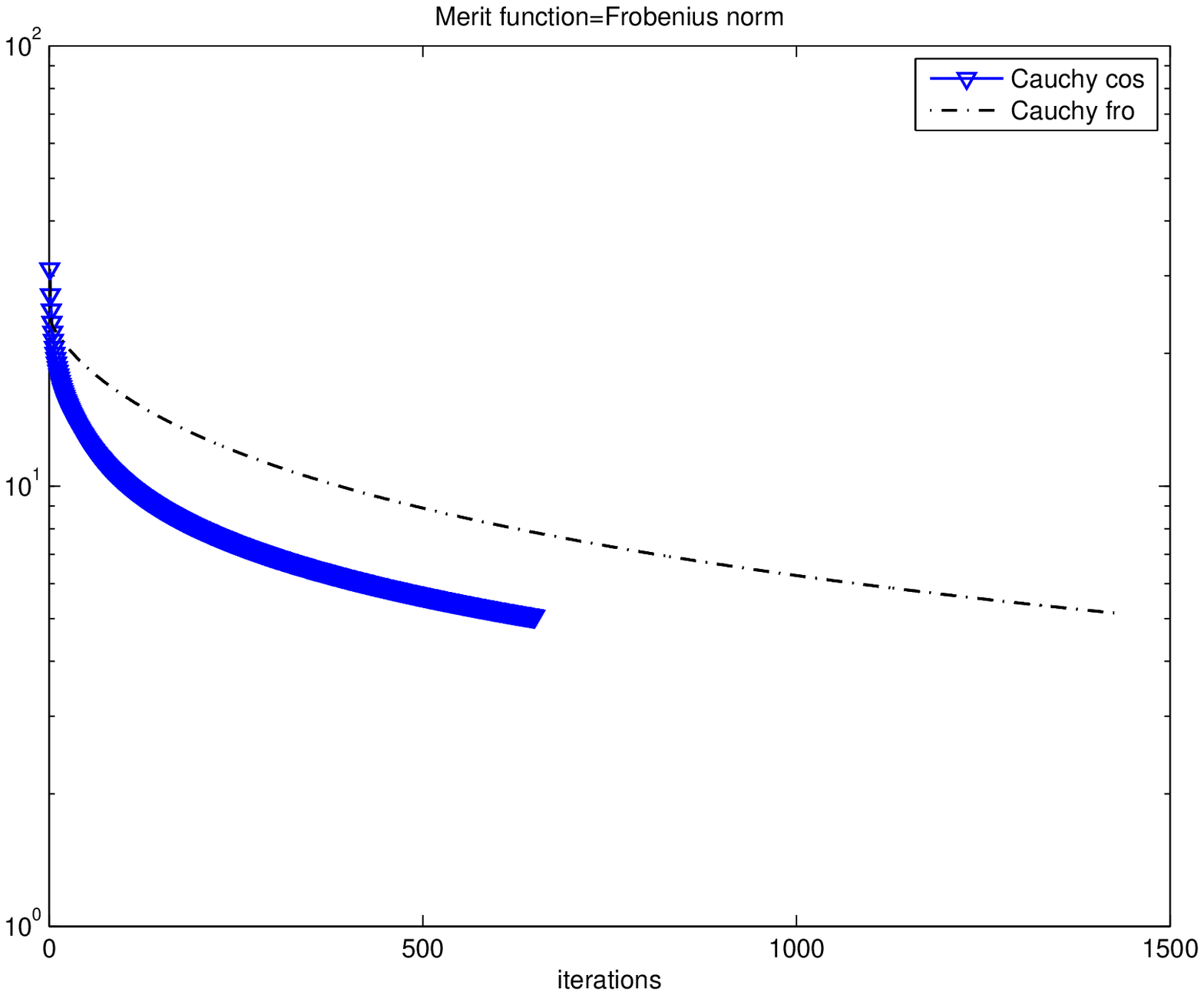}
\caption{CauchyFro versus CauchyCos}
    \end{subfigure}%
    ~
    \begin{subfigure}[b]{0.5\textwidth}
        \centering
          \includegraphics[width=3in,height=2.0in]{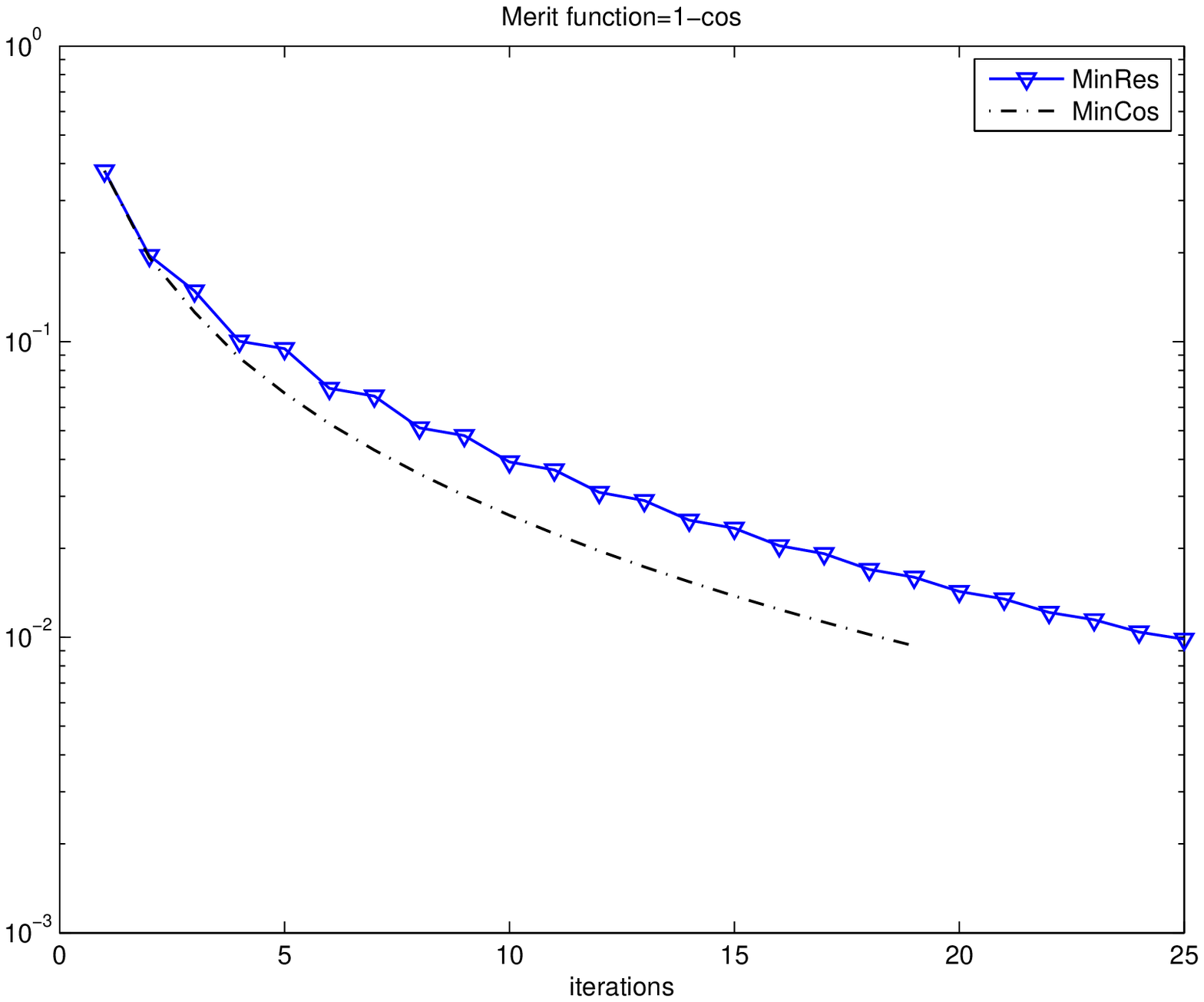}
\includegraphics[width=3in,height=2.0in]{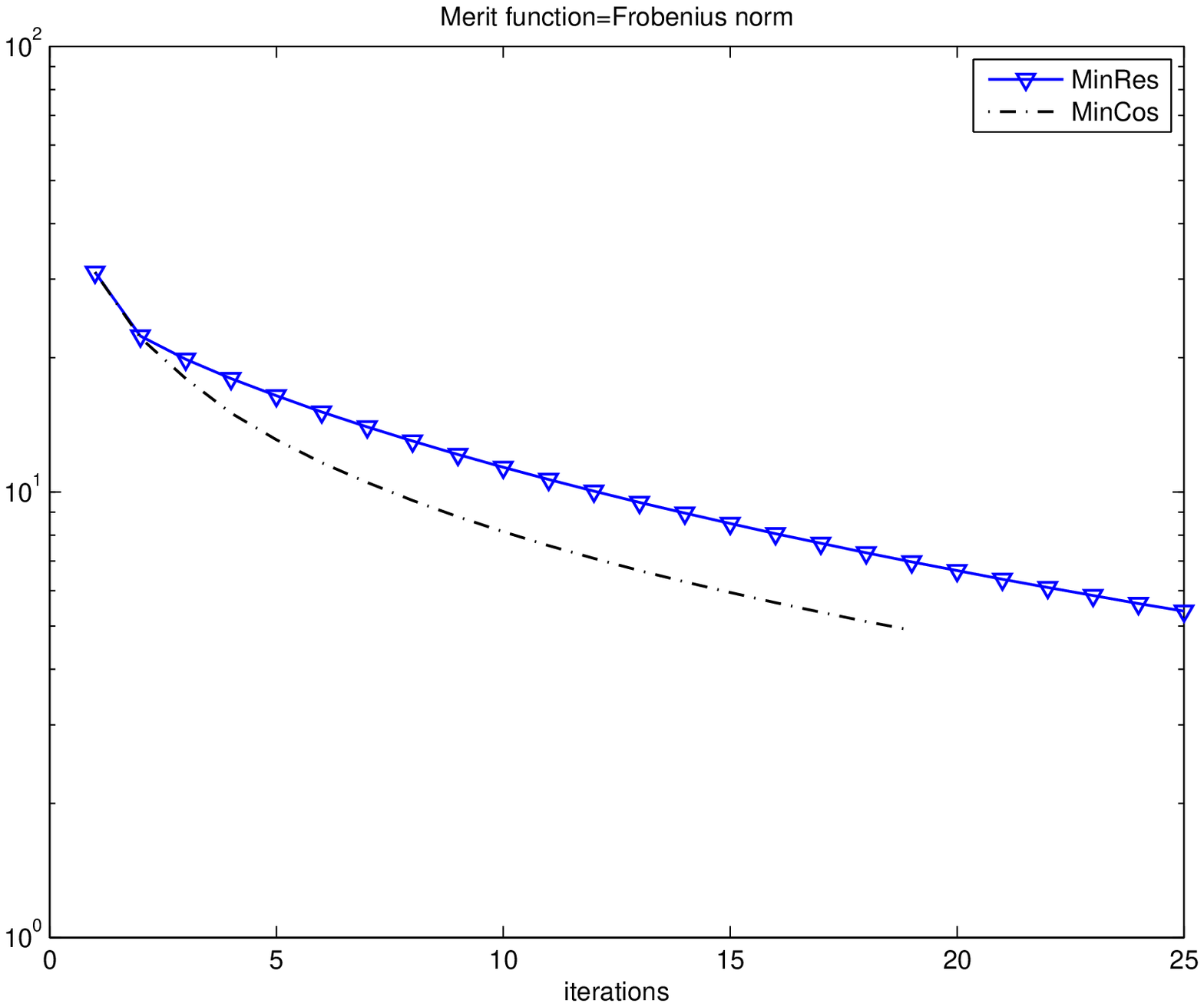}
        \caption{MinRes versus MinCos}
    \end{subfigure}%
\caption{Convergence history for CauchyFro and  CauchyCos  (left), and MinRes and MinCos (right) for
 two merit functions: $F(X)$ (up) and $\Phi(X)$ (down), when applied to the Wathen matrix for $n=20$
  and $\epsilon=0.01$.}
\label{Compcosfro4}
\end{figure*}

In Figure \ref{Compcosfro4} we show the (semilog) convergence history for the four considered methods and for both merit
  functions: $F(X)$  and $\Phi(X)$, when applied to the Wathen matrix for $n=20$ and $\epsilon=0.01$. Once again, we can observe
   that  CauchyFro and CauchyCos are not competitive with  MinRes and MinCos, and that MinCos outperforms MinRes.
   Moreover, we observe in this case that the function
 $F(X)$ is a better merit function than $\Phi(X)$ in the sense that it indicates with fewer iterations that a given iterate is sufficiently
  close to the inverse matrix.  The same good behavior of the merit function $F(X)$ has been observed in all our experiments.

 Based on these preliminary results, we will only report the behavior of MinRes and MinCos for
  the forthcoming numerical experiments.

\subsection{Sparse approximation to the inverse}

We now build sparse approximations by applying the dropping strategy, described in Section 2.5, which is based on a threshold tolerance with a
limited fill-in ($lfil$) on the matrix $Z^{(k+1)}$, at each iteration, right before the scaling step to guarantee that the iterate
$X^{(k+1)} \in S\cap T$.
We define $thr$ as the percentage of coefficients less than the maximum value of the modulus of all the
  coefficients in a column. To be precise, for each $i$-th column we select at most $lfil$ off-diagonal coefficients among the ones  that are larger in magnitude than $thr\times \|(Z^{(k+1)})_i \|_{\infty}$, where $(Z^{(k+1)})_i$ represents
    the $i$-th column of  $Z^{(k+1)}$.

We begin by comparing MinRes and MinCos when we apply the numerical  dropping strategy. For both algorithms
 we use the column-oriented sparse calculations described in Section 2.5; see also \cite{ChowSaad}.

\begin{table}[h!!]
\centering
\begin{tabular}{|l|c|c|c|c|c|l|}
\hline
Matrix&Method&  $\kappa(X^{(k)}A)$/$\kappa(A)$  &  $ [\lambda_{min}, \lambda_{max}]$ of $(X^{(k)}A)$&  Iter & $\%$ fill-in\\
\hline
nos1 ($lfil=10$) & MinCos  & 0.0835& [2.44e-06,2.3272] & 20 & 3.71\\
\hline
nos1($lfil=10$) & MinRes  & & [-98.66,5.40] & & \\
\hline
\hline
nos6 ($lfil=10$) & MinCos   & 0.4218& [5.07e-06,3.1039] &20& 0.45\\
nos6 ($lfil=20$) & MinCos   & 0.2003& [8.51e-06,3.0702] &20& 0.82\\
\hline
nos6($lfil=10$) & MinRes  & & [ -0.7351,2.6001] & &\\
nos6($lfil=20$) & MinRes  & & [  -0.2256,2.2467] & &\\
\hline
\hline
nos5($lfil=5$) & MinCos   & 0.068 & [0.002,1.36] &10&1.18\\
nos5($lfil=10$) & MinCos  & 0.0755& [00.0024,1.3103] &10&2.47\\
\hline
nos5($lfil=5$) & MinRes   & & [-20.31,2.16] & &\\
nos5($lfil=10$) & MinRes   &0.1669 & [0.0021,1.7868] &10 &2.36\\
\hline
\hline
nos2($lfil=5$) & MinCos   &0.1289& [5.2e-09,2.73] & 10 & 0.52\\
nos2($lfil=10$) & MinCos  &0.0891& [7.95e-09,2.2873] & 10 & 0.80\\
nos2($lfil=20$) & MinCos  &0.0700& [9.7e-09,1.9718] & 10 &1.14\\
\hline
nos2($lfil=5$) & MinRes   && $[-0.3326,2.4869]$ &&\\
nos2($lfil=10$) & MinRes   &0.0970& [4.21e-09,1.5414] &10&0.93\\
nos2($lfil=20$) & MinRes  &0.0861& [4.21e-09,1.1638] &10&1.14\\
\hline
\end{tabular}
\caption{Performance of MinRes and MinCos when applied to the Matrix Market matrices nos1, nos2, nos5, and nos6, for
 $\epsilon=0.01$, $thr =0.01$, and different values of $lfil$.}
\label{sparsetest}
\end{table}

Table \ref{sparsetest} shows the performance of MinRes and MinCos when applied to the matrices nos1, nos2, nos5, and nos6,
 for $\epsilon=0.01$, $thr =0.01$, and several values of $lfil$. We report the iteration $k$ (Iter) at which the method was stopped,
 the interval $[\lambda_{min}, \lambda_{max}]$ of $(X^{(k)}A)$, the quotient $\kappa(X^{(k)}A)$/$\kappa(A)$, and the percentage of fill-in
 ($\%$ fill-in) at the final matrix $X^{(k)}$.
 We observe that, when imposing the dropping strategy to obtain sparsity,  MinRes fails to produce an acceptable preconditioner. Indeed, as it
has been already observed  (see \cite{benziTuma, ChowSaad}) quite frequently MinRes produces an indefinite approximation to
  the inverse of  a  sparse matrix in the $PSD$ cone. We also observe that, in all cases, the MinCos method produces a sparse symmetric and
   positive definite preconditioner with relatively few iterations and a low level of fill-in.  Moreover, with the exception of the matrix
    nos6, the MinCos method produces a preconditioned matrix $(X^{(k)}A)$ whose condition number is reduced by a factor of approximately 10
with respect to the  condition number of $A$. In some cases, MinRes was capable of producing a sparse symmetric and positive definite preconditioner,
 but in those cases the MinCos produced a better preconditioner in the sense that it exhibits a better reduction of the condition number, and also
  a better eigenvalues distribution. Based on these results, for the remaining experiments  we only report the behavior of the MinCos Algorithm.

 Table \ref{wathenn} shows the performance of the MinCos Algorithm when applied to the Wathen matrix for different values of $n$ and a
  maximum of 20 iterations. For this numerical experiment we fix  $\epsilon=0.01$, $thr =0.04$, and $lfil=20$. For the particular
   case of the Wathen matrix when $n=50$, we show in Figure  \ref{cgwathen50} the (semilog) convergence history of the norm of the
   residual when solving a linear system with a random right hand side vector, using the Conjugate Gradient (CG) method without preconditioning,
    and also using the preconditioner generated by the MinCos Algorithm after 20 iterations,  fixing  $\epsilon=0.01$, $thr =0.04$, and $lfil=20$.
    We also report in Figure \ref{specwathen50} the eigenvalues distribution of $A$ and of $X^{(k)}A$, at $k=20$,  for the
    same experiment with the  Wathen matrix and $n=50$. Notice that the eigenvalues of $A$ are distributed in the interval $[0, 350]$,
    whereas the eigenvalues of $X^{(k)}A$ are located in the interval $[0.03,1.4]$ (see Table \ref{wathenn}). Even better, we can
     observe that most of the eigenvalues are in the interval $[0.3,1.4]$, and very few of them are in the interval $[0.03,0.3]$, which
    clearly  accounts for the good behavior of the preconditioned CG method (see Figure \ref{cgwathen50}).

\begin{table}[h!!]
\centering
\begin{tabular}{|l|c|c|c|c|l|}
\hline
Matrix $A$& $\kappa(X^{(k)}A)$/$\kappa(A)$ &  $ [\lambda_{min}, \lambda_{max}]$ of $(X^{(k)}A)$& iter & $\%$ fil-in\\
\hline
\hline
wathen (30) &  0.0447& $[0.0109,1.3889]$&20& 0.73\\
\hline
wathen (50) & 0.0461& $[0.0366,1.4012]$&20& 0.27\\
\hline
wathen (70)& 0.0457& $[0.0086,1.3894]$&20& 0.14\\
\hline
wathen (100)& 0.0467& $[0.0289,1.4121]$&20& 6.8436e-02\\
\hline
\end{tabular}
\caption{Performance of MinCos  applied to the Wathen matrix for different values of $n$ and a maximum of 20 iterations, when
  $\epsilon=0.01$, $thr =0.04$, and $lfil=20$.}
  \label{wathenn}
\end{table}

    \begin{figure*}[t!!]
  \centering
     \includegraphics[height=2.5in,width=3in]{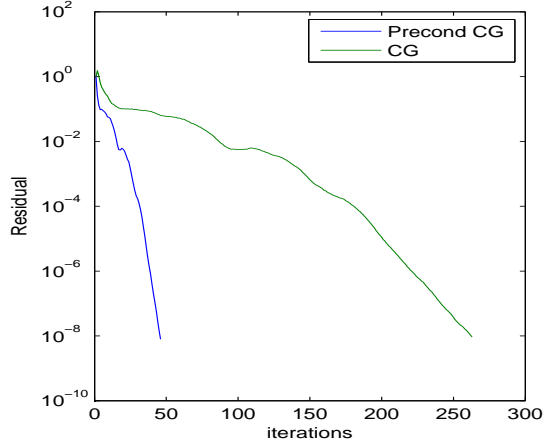}
     \caption{Convergence history of the CG method applied to a linear system with the Wathen matrix, for $n=50$, 20 iterations, $\epsilon=0.01$,
      $thr =0.01$, and $lfil=20$, using the preconditioned generated by the MinCos Algorithm and without preconditioning.}
    ~
     \label{cgwathen50}
    \end{figure*}

\begin{figure*}[t!!]
   \centering
  \includegraphics[height=3in,width=6in]{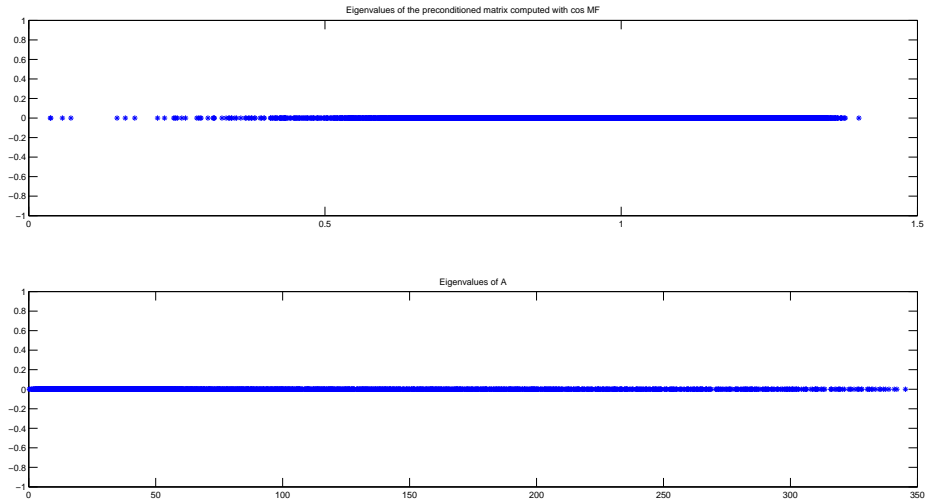}
    \caption{Eigenvalues distribution of $A$ (down) and of $X^{(k)}A$ (up) after 20 iterations of the MinCos Algorithm when applied to the Wathen
    matrix  for $n=50$, $\epsilon=0.01$, $thr =0.01$, and $lfil=20$.}
    ~
      \label{specwathen50}
    \end{figure*}

\begin{table}[h!!]
\centering
\begin{tabular}{|l|c|c|c|c|l|}
\hline
Matrix $A$& $\kappa(X^{(k)}A)$/$\kappa(A)$  &  $ [\lambda_{min}, \lambda_{max}]$ of $(X^{(k)}A)$ &  iter& $\%$ fil-in\\
\hline
\hline
Poisson 2D (50) &  0.1361& $[0.0138,1.2961]$&6& 1.65\\
\hline
Poisson 2D (100) &  0.1249& $[0.0039,1.1452]$&7&  0.41\\
\hline
Poisson 2D (150) &  0.1248& $[0.0017,1.1459]$&7&  0.18\\
\hline
Poisson 2D (200) &  0.1246& [9.78e-04,1.1484] & 7 &  0.10\\
\hline
\end{tabular}
\caption{Performance of MinCos  applied to the  Poisson 2D matrix, for different values of $n$ and a maximum of 20 iterations, when
 $\epsilon=0.01$, $thr =0.04$, and $lfil=40$.}
 \label{poisson2sp}
\end{table}

\begin{table}[h!!]
\centering
\begin{tabular}{|l|c|c|c|c|l|}
\hline
Matrix $A$& $\kappa(X^{(k)}A)$/$\kappa(A)$  &  $ [\lambda_{min}, \lambda_{max}]$ of $(X^{(k)}A)$ &  iter& $\%$ fil-in\\
\hline
\hline
Poisson 3D (10) &  0.3393& $[0.1161,1.4410]$&2& 2.09\\
\hline
Poisson 3D(15) &  0.3357& $[0.0561,1.4639]$&2&  0.66\\
\hline
\end{tabular}
\caption{Performance of MinCos  applied to the  Poisson 3D matrix, for different values of $n$ and a maximum of 20 iterations, when
 $\epsilon=0.01$, $thr =0.01$, and $lfil=40$}
 \label{poisson3sp}
\end{table}

Tables \ref{poisson2sp}, \ref{poisson3sp}, and \ref{lehmersp}  show the performance of the MinCos Algorithm when applied to the Poisson 2D,
 the Poisson 3D, and the Lehmer  matrices, respectively,  for different values of $n$,  and different values of the maximum number of
  iterations, $\epsilon$, $thr$, and $lfil$. We can observe that, for the Poisson 2D and 3D matrices, the MinCos Algorithm produces a sparse
   symmetric and positive definite preconditioner with  very few iterations, a low level of fill-in, and a significant reduction of the
    condition number.

\begin{table}[h!!]
\centering
\begin{tabular}{|l|c|c|c|c|l|}
\hline
Matrix $A$& $\kappa(X^{(k)}A)$/$\kappa(A)$  &  $ [\lambda_{min}, \lambda_{max}]$ of $(X^{(k)}A)$ &  iter& $\%$ fil-in\\
\hline
\hline
Lehmer (100) &  0.0150& $[0.0223,3.4270]$&40& 37.04\\
\hline
Lehmer (200) &  0.0180& $[0.0069,5.0768]$&40& 38.34\\
\hline
\end{tabular}
\caption{Performance of MinCos  applied to the Lehmer matrix, for different values of $n$ and a maximum of 40 iterations, when
 $\epsilon=0.01$, $thr =0.06$, and $lfil=100$.}
 \label{lehmersp}
\end{table}

     For the Lehmer matrix, which is one of the most difficult considered matrices, we observe in Table \ref{lehmersp} that  the MinCos
     Algorithm produces a symmetric and positive definite preconditioner with a significant reduction of the condition number, but after
     40 iterations and fixing  $lfil=100$, for which the preconditioner accepts a high level of fill-in. If we impose a low level of fill-in,
     by reducing the value of $lfil$, MinCos still produces a  symmetric and positive definite matrix, but the reduction of the condition
     number is not significant.

\section{Final remarks}

 We have introduced and analyzed two gradient-type optimization schemes to build sparse inverse preconditioners
  for symmetric positive definite matrices. For that we have proposed the novel objective function  $F(X) = 1-\cos(XA,I)$,
  which is invariant  under positive scaling and has some special properties that are clearly related to the geometry of the
   $PSD$ cone. One of the new schemes, the CauchyCos Algorithm, is closely related to the classical steepest descent method,
    and as a consequence it shows in most cases a very slow convergence. The second new scheme, denoted as the  MinCos Algorithm,
     shows a much faster performance and competes favorably with well-known methods.  Based on our numerical results, by choosing
      properly the numerical dropping parameters,  the MinCos Algorithm  produces a sparse inverse preconditioner in
   the $PSD$ cone  for which a significant reduction of the condition number is observed, while keeping a low level of fill-in.


\end{document}